\documentclass[12pt]{amsart}

\usepackage{amssymb,latexsym}
\usepackage{amsthm,amsfonts}
\usepackage[centertags]{amsmath}
\usepackage[a4paper,margin=22mm]{geometry}

\newcommand{\N}{{\mathbb{N}}}

\newcommand{\Z}{{\mathbb{Z}}}

\newcommand{\F}{{\mathbb{F}}}

\newcommand{\x}{{\bf x}}
\newcommand{\e}{{\bf e}}
\newcommand{\y}{{\bf y}}

\def \se {sequence }

\newtheorem{theorem}{Theorem}
\newtheorem{proposition}{Proposition}
\newtheorem{corollary}{Corollary}
\newtheorem{lemma}{Lemma}

\newtheorem{definition}{Definition}

\newtheorem{remark}{Remark} 
\newtheorem{example}{Example}

\newcommand{\bsa}{\boldsymbol{a}}
\newcommand{\bsA}{\boldsymbol{A}}
\newcommand{\bsb}{\boldsymbol{b}}
\newcommand{\bsc}{\boldsymbol{c}}
\newcommand{\bsd}{\boldsymbol{d}}

\newcommand{\Field}{\mathbb{F}}
\newcommand{\NN}{\mathbb{N}}
\newcommand{\cN}{\mathcal{N}}

\newcommand{\cC}{\mathcal{C}}

\newcommand{\cL}{\mathcal{L}}
\newcommand{\bszero}{\boldsymbol{0}}
\newcommand{\bsone}{\boldsymbol{1}}
\newcommand{\To}{\rightarrow}
\allowdisplaybreaks

\begin{document}
\title[Propagation rules]{Propagation rules for $(u,m,\e,s)$-nets and $(u,\e,s)$-sequences}

\author{PETER KRITZER}
\address{Peter Kritzer, 
Department of Financial Mathematics, 
Johannes Kepler University Linz, 
Altenbergerstr. 69, 
A-4040 Linz, 
AUSTRIA}
\email{peter.kritzer@jku.at (P. Kritzer)}

\author{HARALD NIEDERREITER}
\address{
Harald Niederreiter,
Johann Radon Institute for Computational and Applied Mathematics,
Austrian Academy of Sciences,
Altenbergerstr. 69,
A-4040 Linz,
AUSTRIA,
and
Department of Mathematics, University of Salzburg, Hellbrunnerstr. 34,
A-5020 Salzburg, AUSTRIA
}
\email{ghnied@gmail.com (H. Niederreiter)}

\thanks{P. Kritzer gratefully acknowledges the support of the Austrian Science Fund (FWF), Project P23389-N18}

\subjclass[2010]{11K31, 11K38, 65C05}
\date{\today} 


\keywords{low-discrepancy point sets, digital construction methods, propagation rule, duality theory, global function field}

\begin{abstract}
The classes of $(u,m,\e,s)$-nets and $(u,\e,s)$-sequences were recently introduced by Tezuka, and in a slightly more restrictive 
form by Hofer and Niederreiter. We study propagation rules for these point sets, which state how one can obtain $(u,m,\e,s)$-nets 
and $(u,\e,s)$-sequences with new parameter configurations from existing ones. In this way, 
we show generalizations and extensions of several well-known construction 
methods that have previously been shown for $(t,m,s)$-nets and $(t,s)$-sequences. We also develop a duality theory for digital
$(u,m,\e,s)$-nets and present a new construction of such nets based on global function fields.  
\end{abstract}

\maketitle

\section{Introduction}\label{secintro}

Finite or infinite sequences of points with good equidistribution properties are frequently studied in number-theoretic questions, 
and they also play an important role as the node sets of quadrature rules in numerical integration (see, e.g., \cite{DP} and \cite{N92}). 
When studying the question of how to evenly distribute (a large number of) points in a certain domain, one frequently restricts oneself 
to considering the $s$-dimensional unit cube $[0,1]^s$. 

If we would like to distribute 
a large number of points in the unit cube, a very popular and powerful method is to use $(t,m,s)$-nets 
(for the case of finitely many points) and $(t,s)$-sequences (for the case of 
infinitely many points), or, more generally, the recently introduced $(u,m,\e,s)$-nets and $(u,\e,s)$-sequences. 
The former classes of point sets and sequences were introduced 
in their nowadays most common form by Niederreiter in~\cite{N87}, see also~\cite{N92} for detailed information, while the latter 
were introduced by Tezuka in~\cite{T13} and studied in a slightly 
modified form by Hofer and Niederreiter~\cite{HN13} and Niederreiter and Yeo~\cite{NY}. The underlying idea of these nets and sequences is to guarantee fair 
distribution of the points for certain subintervals 
of the half-open unit cube $[0,1)^s$. To be more precise, let $s\ge 1$ be a given dimension and let $b\ge 2$ be an integer 
(which we usually refer to as the base 
in the following). An interval $J\subseteq [0,1)^s$ is 
called an \emph{elementary interval in base} $b$ if it is of the form 
$$J=\prod_{i=1}^s \left[\frac{a_i}{b^{d_i}},\frac{a_i +1}{b^{d_i}}\right),$$
with integers $d_i\ge 0$ and $0\le a_i < b^{d_i}$ for $1\le i\le s$. These intervals play a crucial role in the subsequent 
definition of a $(u,m,\e,s)$-net, which we state below. 
Here and in the following, we denote by $\NN$ the set of positive integers and by $\lambda_s$ the $s$-dimensional Lebesgue measure.

\begin{definition}\label{defumes} {\rm
 Let $b\ge 2$, $s\ge 1$, and $0\le u\le m$ be integers and let $\e=(e_1,\ldots,e_s)\in\NN^s$. A point set $\mathcal{P}$ of $b^m$ 
points in $[0,1)^s$ is called a $(u,m,\e,s)$-\emph{net in base} $b$ if every elementary interval 
$J\subseteq [0,1)^s$ in base $b$ of volume $\lambda_s (J)\ge b^{u-m}$ and of the form 
$$J=\prod_{i=1}^s \left[\frac{a_i}{b^{d_i}},\frac{a_i +1}{b^{d_i}}\right),$$
with integers $d_i\ge 0$, $0\le a_i < b^{d_i}$, and $e_i |d_i$ for $1\le i\le s$, contains exactly $b^m \lambda_s (J)$ points of $\mathcal{P}$.}
\end{definition}

Note that the points of a $(u,m,\e,s)$-net are particularly evenly distributed if $u$ is small. On the other hand, also the choice of $e_1,\ldots,e_s$ 
plays an important role, as larger values of the $e_i$ in general 
mean less restrictions on the distribution of the points in the unit cube.

Definition~\ref{defumes} is the definition of a $(u,m,\e,s)$-net in base $b$ in the sense of~\cite{HN13}. Previously, Tezuka~\cite{T13} introduced 
a slightly more general definition, where the conditions 
on the number of points in the elementary intervals only need to hold for those elementary intervals $J$ in base $b$ with $\lambda_s (J) = b^{u-m}$. The 
narrower definition used in~\cite{HN13} guarantees, as stated in that 
paper, that any $(u,m,\e,s)$-net in base $b$ is also a $(v,m,\e,s)$-net in base $b$ for any integer $v$ with $u\le v\le m$. The latter property 
is a very useful property when working with such point sets (see also \cite{HN13} for further details); 
hence, whenever we speak of a $(u,m,\e,s)$-net here, we mean a $(u,m,\e,s)$-net in the narrower sense of Definition~\ref{defumes}. 

The definition of a $(u,\e,s)$-sequence is based on $(u,m,\e,s)$-nets and is given next. As usual, we write
$[\x]_{b,m}$ for the coordinatewise $m$-digit truncation in base $b$ of $\x \in [0,1]^s$.

\begin{definition}\label{defues} {\rm
Let $b\ge 2$, $s\ge 1$, and $u\ge 0$ be integers and let $\e\in\NN^s$. A sequence $\x_0,\x_1,\ldots$ of points in $[0,1]^s$ is a 
$(u,\e,s)$-\emph{sequence in base} $b$ if for all integers $k\ge 0$ and $m>u$ the 
points $[\x_n]_{b,m}$ with $kb^m \le n < (k+1)b^m$ form a $(u,m,\e,s)$-net in base $b$. } 
\end{definition}

Again, the points of a $(u,\e,s)$-sequence are evenly distributed if $u$ is small, but also in this case the choice of 
$\e$ has influence on the conditions on how the points are spread over the elementary intervals 
in the unit cube.

If we choose $\e=(1,\ldots,1) \in \N^s$ in Definitions~\ref{defumes} and~\ref{defues}, then the definitions coincide with those of a classical
$(t,m,s)$-net or a classical $(t,s)$-sequence with $t=u$, respectively, 
as they were introduced in~\cite{N87}. The reasons why the more general 
$(u,m,\e,s)$-nets and $(u,\e,s)$-sequences were introduced are twofold. On the one hand, as pointed out by Tezuka in~\cite{T13}, 
one can derive better bounds on the discrepancy for at least some examples of $(t,m,s)$-nets 
and $(t,s)$-sequences by viewing them as special cases of $(u,m,\e,s)$-nets and $(u,\e,s)$-sequences (see also the recent paper~\cite{FL13} for related results). 
Furthermore, by viewing, e.g., $(t,s)$-sequences as $(u,\e,s)$-sequences, it is possible to deal with special types of $(t,s)$-sequences in a very natural way. 
For example, as pointed out in~\cite{T13} 
(see also~\cite{HN13}), a generalized Niederreiter sequence over the finite field 
$\Field_q$ ($q$ a prime power) is a $(u,\e,s)$-sequence in base $q$ with $u=0$, where $\e=(e_1,\ldots,e_s)\in\NN^s$ is such that  
$e_i$, $1\le i\le s$, exactly corresponds to the degree of the $i$th base polynomial over $\Field_q$ used 
in the construction of the sequence. The interested reader is referred to~\cite{HN13} and~\cite{T13} for further information. 

Most of the constructions of $(u,m,\e,s)$-nets and $(u,\e,s)$-sequences are based on the digital construction method 
introduced by Niederreiter in~\cite{N87}. Let us first outline the digital construction method 
for $(u,m,\e,s)$-nets. Let $s\ge 1$ be a given dimension, let $q$ be a prime power, and consider the finite field $\F_q$ with $q$ elements.  
Furthermore, put $Z_q:=\{0,1,\ldots,q-1\} \subset \Z$. Choose bijections $\eta_r:Z_q\To\Field_q$ for 
all integers $0\le r\le m-1$ and bijections $\kappa_{i,j}:\Field_q\To Z_q$ for $1\le i\le s$ and $1\le j\le m$. Furthermore, we choose 
$m\times m$ \emph{generating matrices} $C_1,\ldots,C_s$ over $\Field_q$. For $n\in\{0,1,\ldots,q^m-1\}$, let 
$n=\sum_{v=0}^{m-1}n_v q^v$ be the base $q$ expansion of $n$. For $1\le i\le s$, we compute the matrix-vector product 
$$ C_i \cdot (\eta_0 (n_0),\eta_1 (n_1),\ldots,\eta_{m-1}(n_{m-1}))^\top=:\left(y_{n,1}^{(i)},y_{n,2}^{(i)},\ldots,y_{n,m}^{(i)}\right)^\top$$ and then we put
$$x_n^{(i)}:=\sum_{j=1}^m \kappa_{i,j} \left(y_{n,j}^{(i)}\right) q^{-j}.$$
Finally, we put $\x_n:=(x_n^{(1)},\ldots,x_n^{(s)}) \in [0,1)^s$ for $0\le n\le q^m-1$. Then the point set consisting of the points $\x_0,\x_1,\ldots,\x_{q^m-1}$ is 
called a \emph{digital net over} $\Field_q$. 
Regarding the parameters of a digital net, we recall the following proposition from~\cite{HN13}.

\begin{proposition}\label{propdignet}
The matrices $C_1,\ldots,C_s\in\Field_q^{m\times m}$ generate a digital $(u,m,\e,s)$-net over $\F_q$ if and only if, for any nonnegative 
integers $d_1,\ldots,d_s$ with $e_i | d_i$ for $1\le i\le s$ and 
$d_1+\cdots + d_s\le m-u$, the collection of the $d_1+ \cdots +d_s$ vectors obtained by taking the first $d_i$ rows of 
$C_i$ for $1\le i\le s$ is linearly independent over $\Field_q$. 
\end{proposition}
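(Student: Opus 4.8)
The plan is to translate the combinatorial net condition into a statement about solution counts of a linear system over $\F_q$, and then read off linear independence from those counts. First I would record the bijection underlying the digital construction: the map $n\mapsto \vec{n}:=(\eta_0(n_0),\ldots,\eta_{m-1}(n_{m-1}))^\top$ is a bijection from $\{0,1,\ldots,q^m-1\}$ onto $\F_q^m$, since base-$q$ expansion and the $\eta_r$ are bijections. By construction $y_{n,j}^{(i)}=\mathbf{c}_{i,j}\cdot \vec{n}$, where $\mathbf{c}_{i,j}$ denotes the $j$th row of $C_i$, and the $j$th base-$q$ digit of $x_n^{(i)}$ equals $\kappa_{i,j}(y_{n,j}^{(i)})$.

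Fix nonnegative integers $d_1,\ldots,d_s$ with $e_i\mid d_i$ and $D:=d_1+\cdots+d_s\le m-u$, and consider an elementary interval $J=\prod_{i=1}^s[a_i/q^{d_i},(a_i+1)/q^{d_i})$. The containment $\x_n\in J$ is equivalent to prescribing, for each $i$, the first $d_i$ base-$q$ digits of $x_n^{(i)}$ to be those of $a_i$; via the bijections $\kappa_{i,j}$ this in turn prescribes the values $y_{n,j}^{(i)}=\mathbf{c}_{i,j}\cdot\vec{n}$ for $1\le j\le d_i$. Hence $\x_n\in J$ holds if and only if $\vec{n}$ solves a system of $D$ linear equations over $\F_q$ whose coefficient rows are exactly the vectors obtained by taking the first $d_i$ rows of $C_i$ for $1\le i\le s$, and whose right-hand side $\vec{b}\in\F_q^D$ is determined by $(a_1,\ldots,a_s)$. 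Because $n\mapsto\vec{n}$ is a bijection, the number of points of the net in $J$ equals the number of solutions $\vec{n}\in\F_q^m$ of this system. Moreover, as $(a_1,\ldots,a_s)$ ranges over all admissible values for fixed $(d_1,\ldots,d_s)$, the bijectivity of the $\kappa_{i,j}$ shows that $\vec{b}$ ranges over all of $\F_q^D$; in particular $\vec{b}=\bszero$ is realized by some admissible $J$.

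For the forward direction I would assume the stated linear independence. Then the coefficient matrix of the system has rank $D$, so for every $\vec{b}\in\F_q^D$ the solution set is an affine subspace of dimension $m-D$, giving exactly $q^{m-D}=q^m\lambda_s(J)$ points in $J$; since every admissible $J$ arises this way, $\mathcal{P}$ is a $(u,m,\e,s)$-net. For the converse I would argue contrapositively: if for some admissible $(d_1,\ldots,d_s)$ the $D$ rows are linearly dependent, the coefficient matrix has rank $r<D$, and the homogeneous system (corresponding to $\vec{b}=\bszero$, which is admissible by the previous paragraph) has $q^{m-r}>q^{m-D}$ solutions. The interval $J$ attached to $\vec{b}=\bszero$ then contains strictly more than $q^m\lambda_s(J)$ points, so $\mathcal{P}$ fails to be a $(u,m,\e,s)$-net.

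I expect the main point requiring care to be the bookkeeping of the digit/equation correspondence in the second paragraph: one must verify both that containment in $J$ is faithfully captured by the $D$ prescribed equations (no more, no fewer, using that $e_i\mid d_i$ only restricts which row-prefixes occur and does not otherwise alter the linear algebra) and that the map from admissible tuples $(a_1,\ldots,a_s)$ to right-hand sides $\vec{b}$ is a bijection onto $\F_q^D$, which is what makes $\vec{b}=\bszero$ available in the converse. The remainder is the standard rank count over $\F_q$, identical to the classical $(t,m,s)$-net case with $\e=(1,\ldots,1)$.
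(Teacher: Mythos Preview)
The paper does not prove this proposition; it is quoted from~\cite{HN13} as a known result (``we recall the following proposition from~\cite{HN13}''), so there is no in-paper proof to compare against.

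That said, your argument is correct and is precisely the standard one: the digital construction sets up a bijection between $\{0,\ldots,q^m-1\}$ and $\F_q^m$, membership of $\x_n$ in an elementary interval with shape $(d_1,\ldots,d_s)$ becomes a linear system with $D=\sum_i d_i$ equations whose coefficient rows are the first $d_i$ rows of the $C_i$, and the rank dichotomy over $\F_q$ gives the equivalence. Your observation that the map $(a_1,\ldots,a_s)\mapsto \vec b$ is a bijection onto $\F_q^D$ (so in particular $\vec b=\bszero$ is realised by some admissible interval) is the right way to handle the converse, and your remark that the divisibility constraints $e_i\mid d_i$ merely select which row-prefixes are tested, without altering the linear algebra, is exactly the point of the generalisation from $(t,m,s)$-nets to $(u,m,\e,s)$-nets.
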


For the digital construction of a $(u,\e,s)$-sequence, we choose bijections $\eta_r:Z_q\To\Field_q$ for 
all integers $r\ge 0$, satisfying $\eta_r(0)=0$ for all sufficiently large $r$, and bijections $\kappa_{i,j}:\Field_q\To Z_q$ for $1\le i\le s$ and $j\ge 1$. 
Furthermore, we choose $\infty \times \infty$ \emph{generating matrices} $C_1,\ldots,C_s$ over $\F_q$ (by an $\infty \times \infty$ matrix over $\F_q$ 
we mean a matrix over $\F_q$ with denumerably many rows and columns). 
For an integer $n\ge 0$, let $n=\sum_{v=0}^{\infty}n_v q^v$ be the base $q$ expansion of $n$.  
For $1\le i\le s$, we compute the matrix-vector product 
$$ C_i \cdot (\eta_0 (n_0),\eta_1 (n_1),\ldots)^\top=:\left(y_{n,1}^{(i)},y_{n,2}^{(i)},\ldots\right)^\top$$ and then we put
$$x_n^{(i)}:=\sum_{j=1}^\infty \kappa_{i,j} \left(y_{n,j}^{(i)}\right) q^{-j}.$$
Finally, we put $\x_n:=(x_n^{(1)},\ldots,x_n^{(s)}) \in [0,1]^s$ for $n=0,1,\ldots$ . Then the \se $\x_0,\x_1,\ldots$ is called a \emph{digital 
sequence over} $\Field_q$. As for digital nets, 
the properties of the matrices $C_1,\ldots,C_s$ are intimately related to the parameters of a digital sequence; the following proposition is also 
due to~\cite{HN13}. 

\begin{proposition}\label{propdigseq}
The $\infty \times \infty$ matrices $C_1,\ldots,C_s$ over $\F_q$ generate a digital $(u,\e,s)$-sequence over $\F_q$ if and only if, for every integer 
$m>u$, the left upper $m\times m$ submatrices $C_1^{(m)},\ldots,C_s^{(m)}$ of the 
$C_i$ generate a digital $(u,m,\e,s)$-net over $\F_q$.
\end{proposition}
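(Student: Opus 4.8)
The plan is to prove Proposition~\ref{propdigseq} by reducing the statement about the infinite generating matrices $C_1,\dots,C_s$ directly to the finite case governed by Proposition~\ref{propdignet}, exploiting the definition of a $(u,\e,s)$-sequence in terms of its derived nets (Definition~\ref{defues}). The key observation is that both sides of the claimed equivalence are phrased through the \emph{same} truncated objects, so the whole proof should amount to checking that the digital net generated by the truncated matrices $C_1^{(m)},\dots,C_s^{(m)}$ is exactly the net obtained from the sequence by $m$-digit truncation of the appropriate block of points.

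\medskip
\noindent\emph{Step 1: Identify the truncated net with a digital net.} First I would fix integers $k\ge 0$ and $m>u$ and consider the points $\x_n$ of the digital sequence for $kb^m\le n<(k+1)b^m$, where here $b=q$. I would analyze the $m$-digit truncation $[\x_n]_{q,m}$, whose $i$th coordinate depends only on $y_{n,1}^{(i)},\dots,y_{n,m}^{(i)}$, i.e.\ on the first $m$ rows of $C_i$ applied to the digit vector of $n$. The main point is that for $n$ in this range one can write $n=kq^m+\ell$ with $0\le\ell<q^m$, so that the first $m$ base-$q$ digits $n_0,\dots,n_{m-1}$ of $n$ coincide with those of $\ell$, while the higher digits $n_m,n_{m+1},\dots$ are determined solely by $k$. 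Because the matrix-vector product is $\F_q$-linear and because we only retain the first $m$ output coordinates $y_{n,1}^{(i)},\dots,y_{n,m}^{(i)}$, these coordinates split as a contribution coming from the left upper $m\times m$ submatrix $C_i^{(m)}$ applied to $(\eta_0(n_0),\dots,\eta_{m-1}(n_{m-1}))^\top$, plus a shift depending only on $k$ (through the higher digits and the columns beyond the $m$th).

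\medskip
\noindent\emph{Step 2: Argue that the shift preserves the net property.} I would then show that the $k$-dependent shift acts as a translation by a fixed element of $\F_q^m$ within each coordinate, and that such a translation, after applying the $\kappa_{i,j}$, permutes the $q^m$ truncated points of the block among the same family of elementary intervals. Concretely, the set $\{[\x_n]_{q,m}:kq^m\le n<(k+1)q^m\}$ is, up to this digit-shift, the digital net generated by $C_1^{(m)},\dots,C_s^{(m)}$. Since Proposition~\ref{propdignet} characterizes the $(u,m,\e,s)$-net property of $C_1^{(m)},\dots,C_s^{(m)}$ purely in terms of linear independence of rows—an intrinsic property of the submatrices, independent of $k$—the shift does not affect whether the block is a $(u,m,\e,s)$-net. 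Hence the block is a $(u,m,\e,s)$-net for \emph{some} $k$ if and only if it is one for \emph{all} $k$, and this holds iff $C_1^{(m)},\dots,C_s^{(m)}$ generate a digital $(u,m,\e,s)$-net.

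\medskip
\noindent\emph{Step 3: Assemble the equivalence.} Combining Steps~1 and~2, for each fixed $m>u$ the derived net from any block $kq^m\le n<(k+1)q^m$ is a $(u,m,\e,s)$-net iff $C_1^{(m)},\dots,C_s^{(m)}$ generate a digital $(u,m,\e,s)$-net. By Definition~\ref{defues}, $C_1,\dots,C_s$ generate a digital $(u,\e,s)$-sequence precisely when every such block yields a $(u,m,\e,s)$-net for every $m>u$ and every $k\ge 0$. Quantifying over $m>u$ then gives exactly the stated equivalence. I expect the main obstacle to be the careful bookkeeping in Step~2: verifying rigorously that the contribution of the higher digits (those indexed by $k$) to $y_{n,1}^{(i)},\dots,y_{n,m}^{(i)}$ reduces to an $n$-independent additive shift, and that after passing through the nonlinear digit maps $\kappa_{i,j}$ this shift merely relabels the points without disturbing their incidence with the relevant elementary intervals. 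Handling the maps $\eta_r$ and the convention $\eta_r(0)=0$ for large $r$ correctly, so that only finitely many higher digits can contribute, is the delicate part that makes the reduction to the purely linear-algebraic criterion of Proposition~\ref{propdignet} legitimate.
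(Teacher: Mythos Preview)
The paper does not actually prove Proposition~\ref{propdigseq}; it is quoted without proof from~\cite{HN13} (``the following proposition is also due to~\cite{HN13}''), so there is no proof in the paper to compare against.

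That said, your outline is correct and is precisely the standard argument. The decomposition $n=kq^m+\ell$ gives that the first $m$ output coordinates $(y_{n,1}^{(i)},\ldots,y_{n,m}^{(i)})$ equal $C_i^{(m)}(\eta_0(\ell_0),\ldots,\eta_{m-1}(\ell_{m-1}))^\top$ plus a fixed vector in $\F_q^m$ depending only on $k$ (finiteness of this shift being guaranteed by the assumption $\eta_r(0)=0$ for large $r$). The point you flag as delicate---that the post-composition with the bijections $\kappa_{i,j}$ does not spoil the argument---is handled by observing that membership in an elementary interval of the form $\prod_i[a_iq^{-d_i},(a_i+1)q^{-d_i})$ is equivalent to prescribing the values $y_{n,j}^{(i)}$ for $1\le j\le d_i$; the additive shift therefore merely permutes the family of such intervals at each scale, preserving all the counts. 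Hence each block $\{[\x_n]_{q,m}:kq^m\le n<(k+1)q^m\}$ is a $(u,m,\e,s)$-net if and only if the digital net generated by $C_1^{(m)},\ldots,C_s^{(m)}$ is, and the equivalence follows from Definition~\ref{defues}. There is no gap.
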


The problem of how to find $(u,m,\e,s)$-nets with good parameter configurations is non-trivial. One way to tackle this question is to 
consider so-called propagation rules. 
A propagation rule for (digital) nets is a rule that, from one (digital) net or several (digital) nets, produces a (digital) net with new parameters, 
and similarly for (digital) sequences. The theory of propagation rules for classical $(t,m,s)$-nets 
and $(t,s)$-sequences has attracted much interest in the past and there are many results to be found in, e.g., 
\cite{N05}, \cite{NP01}, \cite{NX98}, \cite{NX02} and related papers. We also refer to 
the database MinT (\cite{MinT}), where information on all relevant propagation rules for $(t,m,s)$-nets and $(t,s)$-sequences 
and the resulting parameter configurations can be found. We also remark that there 
exist propagation rules for so-called higher-order nets and sequences as introduced by Dick (see, e.g., \cite{BDP11}, \cite{DK10}, and \cite{DP}).

In the present paper, we study to which extent it is possible to find propagation rules for the new concepts of $(u,m,\e,s)$-nets and 
$(u,\e,s)$-sequences. We will first present propagation rules for (digital) $(u,m,\e,s)$-nets in Section~\ref{secpr}. Then in Section~\ref{secbc}, 
we consider propagation rules for $(u,\e,s)$-sequences that employ a change of the base. In Section~\ref{secduality}, 
we develop a duality theory for digital $(u,m,\e,s)$-nets, and we show 
in Section~\ref{secad} how this theory can be used for finding new digital $(u,m,\e,s)$-nets.

\section{Propagation rules for (digital) $(u,m,\e,s)$-nets} \label{secpr}

In this section, we derive propagation rules for $(u,m,\e,s)$-nets. We first present results that are valid for arbitrary $(u,m,\e,s)$-nets, 
and then we move on to propagation rules that hold for digital nets.
Our first result generalizes the propagation rule that is called Propagation Rule~1 in~\cite{N05}. We write $\N_0$ for the set of nonnegative integers.

\begin{proposition} \label{prpr}
Let $b \ge 2$, $m \ge 0$, $s \ge 1$, and $u\ge 0$ be integers and let $\e =(e_1,\ldots,e_s) \in \N^s$. If a $(u,m,\e,s)$-net in base $b$ is given, then
for every integer $k$ with $u \le k \le m$ such that $m-k$ is a linear combination of $e_1,\ldots,e_s$ with coefficients from $\N_0$, we can construct
a $(u,k,\e,s)$-net in base $b$.
\end{proposition}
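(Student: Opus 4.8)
The plan is to adapt the standard ``restriction and rescaling'' argument behind Propagation Rule~1 to the setting of $(u,m,\e,s)$-nets, with the divisibility conditions on the $e_i$ being the feature that dictates the hypothesis on $m-k$. Let $\mathcal{P}$ denote the given $(u,m,\e,s)$-net in base $b$. Since $m-k$ is a nonnegative integer combination of $e_1,\ldots,e_s$, I would write $m-k=\sum_{i=1}^s c_ie_i$ with $c_i\in\N_0$ and set $f_i:=c_ie_i$, so that $e_i\mid f_i$ for each $i$ and $\sum_{i=1}^s f_i=m-k$. First I would isolate the elementary interval
$$E:=\prod_{i=1}^s\left[0,b^{-f_i}\right),$$
which is an elementary interval in base $b$ with $d_i=f_i$, hence $e_i\mid d_i$ for all $i$, and with $\lambda_s(E)=b^{-(m-k)}=b^{k-m}$. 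As $k\ge u$ we have $\lambda_s(E)\ge b^{u-m}$, so $E$ meets all requirements in Definition~\ref{defumes}, and therefore contains exactly $b^m\lambda_s(E)=b^k$ points of $\mathcal{P}$.

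Next I would rescale. Let $\phi\colon E\to[0,1)^s$ be the affine bijection $\phi(x_1,\ldots,x_s)=(b^{f_1}x_1,\ldots,b^{f_s}x_s)$, and let $\mathcal{Q}$ be the image under $\phi$ of the $b^k$ points of $\mathcal{P}$ lying in $E$; thus $\mathcal{Q}$ consists of $b^k$ points in $[0,1)^s$. The claim is that $\mathcal{Q}$ is a $(u,k,\e,s)$-net in base $b$. To verify this, take an arbitrary elementary interval $J'=\prod_{i=1}^s[a_i/b^{d_i'},(a_i+1)/b^{d_i'})$ in base $b$ with $e_i\mid d_i'$ for all $i$ and with $\lambda_s(J')\ge b^{u-k}$, i.e.\ with $\sum_{i=1}^s d_i'\le k-u$. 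Its preimage
$$J:=\phi^{-1}(J')=\prod_{i=1}^s\left[\frac{a_i}{b^{f_i+d_i'}},\frac{a_i+1}{b^{f_i+d_i'}}\right)\subseteq E$$
is again an elementary interval in base $b$, now with $d_i=f_i+d_i'$.

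Since $e_i\mid f_i$ and $e_i\mid d_i'$, we obtain $e_i\mid d_i$ for all $i$, and
$$\sum_{i=1}^s(f_i+d_i')=(m-k)+\sum_{i=1}^s d_i'\le(m-k)+(k-u)=m-u,$$
so that $\lambda_s(J)\ge b^{u-m}$. Hence $J$ satisfies all hypotheses of Definition~\ref{defumes} for $\mathcal{P}$, and the number of points of $\mathcal{P}$ in $J$ equals $b^m\lambda_s(J)=b^m\,b^{k-m}\lambda_s(J')=b^k\lambda_s(J')$. As $J\subseteq E$ and $\phi$ restricts to a bijection from $\mathcal{P}\cap J$ onto $\mathcal{Q}\cap J'$, the interval $J'$ contains exactly $b^k\lambda_s(J')$ points of $\mathcal{Q}$, which is precisely what the definition of a $(u,k,\e,s)$-net demands.

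The only genuinely delicate point is the bookkeeping of the divisibility conditions: preserving $e_i\mid d_i$ under the rescaling forces each $f_i$ to be a multiple of $e_i$, which is exactly why $m-k$ must be expressible as a nonnegative integer combination of $e_1,\ldots,e_s$. The remaining steps — checking that $E$ captures exactly $b^k$ points (using $k\ge u$) and that the point counts transport correctly through $\phi$ — are routine once $E$ and $\phi$ have been chosen.
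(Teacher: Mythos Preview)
Your proof is correct and follows essentially the same approach as the paper: both select the same elementary interval $E=\prod_{i=1}^s[0,b^{-f_i})$ with $f_i$ a multiple of $e_i$ summing to $m-k$, rescale the $b^k$ points of $\mathcal{P}\cap E$ by the map $(x_1,\ldots,x_s)\mapsto(b^{f_1}x_1,\ldots,b^{f_s}x_s)$, and verify the net property by pulling back an admissible elementary interval $J'$ to an admissible elementary interval $J\subseteq E$. The only differences are notational (you name the rescaling map $\phi$ and write $f_i=c_ie_i$, whereas the paper writes the exponents as $f_ie_i$ directly).
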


\begin{proof}
Let the point set $\mathcal{P}$ be a $(u,m,\e,s)$-net in base $b$. By assumption, we can write $m-k=\sum_{i=1}^s f_ie_i$ with $f_i \in \N_0$ for $1 \le i \le s$.
Consider the interval
$$
E=\prod_{i=1}^s [0,b^{-f_ie_i}) \subseteq [0,1]^s.
$$
Then $\lambda_s(E)=b^{k-m} \ge b^{u-m}$, and so the definition of a $(u,m,\e,s)$-net in base $b$ implies that $E$ contains exactly $b^k$ points of the point set
$\mathcal{P}$. Let these $b^k$ points be
$$
\x_n=(x_n^{(1)},\ldots,x_n^{(s)}) \in E \qquad \mbox{for } n=1,\ldots,b^k.
$$
Now we define the points
$$
\y_n=\left(b^{f_1e_1}x_n^{(1)},\ldots,b^{f_se_s}x_n^{(s)} \right) \in [0,1)^s \qquad \mbox{for } n=1,\ldots,b^k.
$$
We will show that the points $\y_n$, $n=1,\ldots,b^k$, form a $(u,k,\e,s)$-net in base $b$. Let $J \subseteq [0,1]^s$ be an interval of the form
$$
J=\prod_{i=1}^s [a_ib^{-d_i},(a_i+1)b^{-d_i})
$$
with $a_i,d_i \in \N_0$, $a_i < b^{d_i}$, and $e_i|d_i$ for $1 \le i \le s$, and with $\lambda_s(J) \ge b^{u-k}$. Then $\y_n \in J$ if and only if
$$
\x_n \in J^{\prime} := \prod_{i=1}^s [a_ib^{-f_ie_i-d_i},(a_i+1)b^{-f_ie_i-d_i}) \subseteq E.
$$
Note that $\lambda_s(J^{\prime})=b^{k-m} \lambda_s(J) \ge b^{u-m}$. Again by the definition of a $(u,m,\e,s)$-net in base $b$, the number of points $\x_n \in J^{\prime}$ 
is $b^m \lambda_s(J^{\prime})=b^k \lambda_s(J)$, and so the number of points $\y_n \in J$ is $b^k \lambda_s(J)$, which is the desired property.
\end{proof}

The following proposition generalizes a well-known result on $(t,m,s)$-nets and $(t,s)$-sequences from~\cite{N87} (see also \cite[Lemma~4.22]{N92}).

\begin{proposition} \label{prsn}
Let $b \ge 2$, $s \ge 1$, and $u \ge 0$ be integers and let $\e =(e_1,\ldots,e_s) \in \N^s$. If a $(u,\e,s)$-sequence in base $b$ is given, then for every
integer $m \ge u$ we can construct a $(u,m,\e^{\prime},s+1)$-net in base $b$, where $\e^{\prime}=(1,e_1,\ldots,e_s) \in \N^{s+1}$.
\end{proposition}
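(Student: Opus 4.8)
The plan is to reproduce the classical net-from-sequence construction by prepending a one-dimensional ``time'' coordinate, and then to read off the elementary-interval condition via Definition~\ref{defues}. Write $\x_0,\x_1,\ldots$ for the given $(u,\e,s)$-sequence in base $b$, fix $m\ge u$, and define the $(s+1)$-dimensional points
$$
\y_n=\left(\frac{n}{b^m},\,[\x_n]_{b,m}\right)\in[0,1)^{s+1},\qquad n=0,1,\ldots,b^m-1.
$$
I claim these $b^m$ points form a $(u,m,\e^{\prime},s+1)$-net in base $b$ with $\e^{\prime}=(1,e_1,\ldots,e_s)$. To check the defining property I take an arbitrary elementary interval $J\subseteq[0,1)^{s+1}$ in base $b$ satisfying the divisibility constraints imposed by $\e^{\prime}$ and with $\lambda_{s+1}(J)\ge b^{u-m}$, and I count the points $\y_n$ lying in $J$. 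I split $J=J_0\times J^{\prime}$, where $J_0=[a_0b^{-d_0},(a_0+1)b^{-d_0})$ is the factor in the prepended coordinate---whose modulus in $\e^{\prime}$ is $1$, so that $d_0\ge 0$ is unconstrained---and $J^{\prime}=\prod_{i=1}^s[a_ib^{-d_i},(a_i+1)b^{-d_i})$ is the factor in the remaining $s$ coordinates, subject to $e_i\mid d_i$. The volume condition then reads $d_0+(d_1+\cdots+d_s)\le m-u$.

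Next I would decode the two membership requirements. The first-coordinate condition $n/b^m\in J_0$ is equivalent to $a_0b^{m-d_0}\le n<(a_0+1)b^{m-d_0}$, which confines $n$ to a single block of $b^{m^{\prime}}$ consecutive integers, where $m^{\prime}:=m-d_0$. From the volume condition and $u\ge 0$ one gets $m^{\prime}\ge u$ and $d_1+\cdots+d_s\le m^{\prime}-u$, so in particular each $d_i\le m^{\prime}$. This is the crucial bookkeeping step, and the one I expect to be the main obstacle: because $J^{\prime}$ constrains only the first $d_i\le m^{\prime}$ digits of the $i$th coordinate, membership of $[\x_n]_{b,m}$ in $J^{\prime}$ depends only on the first $m^{\prime}$ digits, so $[\x_n]_{b,m}\in J^{\prime}$ if and only if $[\x_n]_{b,m^{\prime}}\in J^{\prime}$. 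I would state this truncation compatibility explicitly, since it is exactly where the net's truncation depth $m$ must be reconciled with the sequence definition's depth $m^{\prime}$ for a block of length $b^{m^{\prime}}$.

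With this in hand the count follows from Definition~\ref{defues}. Assuming first $m^{\prime}>u$, the points $[\x_n]_{b,m^{\prime}}$ with $a_0b^{m^{\prime}}\le n<(a_0+1)b^{m^{\prime}}$ form a $(u,m^{\prime},\e,s)$-net in base $b$; since $\lambda_s(J^{\prime})=b^{-(d_1+\cdots+d_s)}\ge b^{u-m^{\prime}}$, this net contains exactly $b^{m^{\prime}}\lambda_s(J^{\prime})$ of these points in $J^{\prime}$. Hence
$$
\#\{\,n:\y_n\in J\,\}=b^{m^{\prime}}\lambda_s(J^{\prime})=b^m\,b^{-d_0}\lambda_s(J^{\prime})=b^m\lambda_{s+1}(J),
$$
as required. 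The only remaining point is the boundary case $m^{\prime}=u$, where Definition~\ref{defues} does not apply (it requires strictly $m^{\prime}>u$); but then $d_1+\cdots+d_s\le 0$ forces $J^{\prime}=[0,1)^s$, so all $b^{m^{\prime}}=b^u$ points of the block lie in $J^{\prime}$ trivially and the count is again $b^u=b^m\lambda_{s+1}(J)$. Since every admissible $J$ falls under one of these two cases, this establishes the net property and completes the argument.
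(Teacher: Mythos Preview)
Your proof is correct and follows the same construction and argument as the paper's own proof: prepend the coordinate $n/b^m$, split the elementary interval, and invoke the sequence definition on the resulting block of length $b^{m-d_0}$. You are in fact slightly more careful than the paper in spelling out the truncation compatibility $[\x_n]_{b,m}\in J'\Leftrightarrow[\x_n]_{b,m'}\in J'$ and in handling the boundary case $m'=u$ separately (which the paper passes over with the harmless implicit convention that any $b^u$ points form a $(u,u,\e,s)$-net).
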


\begin{proof}
Let $\x_0,\x_1,\ldots$ be a $(u,\e,s)$-\se in base $b$. We fix an integer $m \ge u$ and define the points
$$
\y_n=(nb^{-m},[\x_n]_{b,m}) \in [0,1)^{s+1} \qquad \mbox{for } n=0,1,\ldots,b^m-1.
$$
We claim that these points form a $(u,m,\e^{\prime},s+1)$-net in base $b$. Consider an interval $J^{\prime} \subseteq [0,1]^{s+1}$ of the form
$$
J^{\prime}=\prod_{i=1}^{s+1} [a_ib^{-d_i},(a_i+1)b^{-d_i})
$$
with $a_i,d_i \in \N_0$ and $a_i < b^{d_i}$ for $1 \le i \le s+1$, $e_i|d_i$ for $2 \le i \le s+1$, and $\lambda_{s+1}(J^{\prime}) \ge b^{u-m}$, that is,
$\sum_{i=1}^{s+1} d_i \le m-u$. We have $\y_n \in J^{\prime}$ if and only if $a_1b^{m-d_1} \le n < (a_1+1)b^{m-d_1}$ and
$$
[\x_n]_{b,m} \in J := \prod_{i=2}^{s+1} [a_ib^{-d_i},(a_i+1)b^{-d_i}).
$$
Now $m-d_1 \ge u+\sum_{i=2}^{s+1} d_i \ge u$, and so the definition of a $(u,\e,s)$-\se in base $b$ implies that the points $[\x_n]_{b,m}$ with $a_1b^{m-d_1}
\le n < (a_1+1)b^{m-d_1}$ form a $(u,m-d_1,\e,s)$-net in base $b$. Since
$$
\lambda_s(J)=b^{-(d_2+ \cdots + d_{s+1})} \ge b^{u-m+d_1},
$$
it follows that the number of points $\y_n$, $0 \le n \le b^m-1$, contained in the interval $J^{\prime}$ is equal to $b^{m-d_1} \lambda_s(J)=b^m \lambda_{s+1}
(J^{\prime})$, and so we are done.
\end{proof}

Before we proceed to the formulation of propagation rules for digital $(u,m,\e,s)$-nets, we give the following definition of a 
$(d,m,\e,s)$-system. For the corresponding definition for classical $(t,m,s)$-nets, see, e.g., \cite{NP01} and~\cite{NX98}.

\begin{definition} \label{defsystem} {\rm
Let $q$ be a prime power, let $m \ge 1$ and $s \ge 1$ be integers, let $d$ be an integer with $0\le d\le m$, and let 
$\e=(e_1,\ldots,e_s)\in \N^s$. The system 
$$A=\left\{\bsa_j^{(i)}\in\Field_q^m: 1\le i\le s, 1\le j\le m\right\}$$
of vectors is called a $(d,m,\e,s)$-\emph{system over} $\Field_q$ if for any choice of 
nonnegative integers $d_1,\ldots,d_s$ with $e_i |d_i$ for $1\le i\le s$ and $\sum_{i=1}^s d_i\le d$, the 
vectors $\bsa_j^{(i)}$, $1\le j\le d_i$, $1\le i\le s$, are linearly independent over $\Field_q$ (this property is 
assumed to be trivially satisfied for $d=0$). } 
\end{definition}

The next lemma is analogous to \cite[Lemma~3]{NX98}. 
For an $m \times m$ matrix $C_i$ over $\F_q$ and $1 \le j \le m$, we write $\bsc_j^{(i)}$ for the $j$th
row vector of $C_i$.

\begin{lemma} \label{lemnetsystem}
Let $q$ be a prime power, let $m \ge 1$, $s \ge 1$, and $u \ge 0$ be integers, and let $\e \in \N^s$. 
A digital net over $\Field_q$ with $m \times m$ generating matrices $C_1,\ldots,C_s$ over $\F_q$ 
is a digital $(u,m,\e,s)$-net over $\F_q$ if and only if the system $\{\bsc_j^{(i)} \in \F_q^m: 1 \le i \le s, 1 \le j \le m\}$ 
of row vectors of the matrices 
$C_1,\ldots,C_s$ is an $(m-u,m,\e,s)$-system over $\Field_q$.
\end{lemma}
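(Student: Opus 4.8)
The plan is to translate Proposition~\ref{propdignet} into the language of Definition~\ref{defsystem} and observe that the two conditions are literally the same statement, once the parameter substitution $d = m-u$ is made. Concretely, Proposition~\ref{propdignet} characterizes the digital $(u,m,\e,s)$-net property of $C_1,\ldots,C_s$ by the requirement that, for all nonnegative integers $d_1,\ldots,d_s$ with $e_i \mid d_i$ and $d_1 + \cdots + d_s \le m-u$, the collection of the first $d_i$ rows $\bsc_1^{(i)},\ldots,\bsc_{d_i}^{(i)}$ of $C_i$ (taken over all $1 \le i \le s$) is linearly independent over $\Field_q$. On the other hand, Definition~\ref{defsystem} with $d = m-u$ says precisely that the system $\{\bsc_j^{(i)}\}$ is an $(m-u,m,\e,s)$-system over $\Field_q$ exactly when, for all such $d_1,\ldots,d_s$ with $\sum_{i=1}^s d_i \le m-u$, the vectors $\bsc_j^{(i)}$ with $1 \le j \le d_i$, $1 \le i \le s$, are linearly independent.

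The key step is therefore the bookkeeping identification: the vectors ``$\bsc_j^{(i)}$ for $1 \le j \le d_i$'' are by definition the first $d_i$ rows of $C_i$, so the linear independence condition appearing in Proposition~\ref{propdignet} and the one appearing in Definition~\ref{defsystem} are word-for-word identical, and the index ranges over $d_1,\ldots,d_s$ coincide under the constraint $\sum d_i \le m-u$. Thus I would first invoke Proposition~\ref{propdignet} to restate the net property as a linear-independence condition, then verify that $d := m-u$ satisfies $0 \le d \le m$ (which holds because $0 \le u \le m$ is part of the standing hypotheses for a $(u,m,\e,s)$-net), and finally quote Definition~\ref{defsystem} to recognize that condition as the defining property of an $(m-u,m,\e,s)$-system. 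Since each implication is an ``if and only if'' between two identical statements, both directions follow simultaneously.

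I do not expect a genuine obstacle here; the lemma is essentially a reformulation, and the only thing to be careful about is confirming that the range $0 \le d \le m$ required in Definition~\ref{defsystem} is met and that the divisibility constraints $e_i \mid d_i$ are present in both formulations (they are). The one degenerate case worth a remark is $u = m$, giving $d = 0$, where both the net condition and the system condition are vacuous (Definition~\ref{defsystem} explicitly declares the property trivially satisfied for $d=0$), so the equivalence holds trivially there as well. The proof is thus a short chain of equivalences citing Proposition~\ref{propdignet} and Definition~\ref{defsystem}, with no substantive computation required.
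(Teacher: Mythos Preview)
Your proposal is correct and follows exactly the same approach as the paper, which simply states that the result follows immediately from the definition of a $(d,m,\e,s)$-system and Proposition~\ref{propdignet}. Your additional remarks on the range $0 \le m-u \le m$ and the degenerate case $d=0$ are valid but not strictly necessary, since these points are already implicit in the hypotheses and in Definition~\ref{defsystem}.
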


\begin{proof}
The result follows immediately from the definition of a $(d,m,\e,s)$-system and Proposition~\ref{propdignet}. 
\end{proof}

Lemma~\ref{lemnetsystem} enables us to show the following propagation rule, which is a generalization of \cite[Theorem~10]{NX98} (also 
referred to as Direct Product Rule or Propagation Rule 4 in \cite{N05}).

\begin{theorem}\label{thmdirectproduct}
Let $q$ be a prime power, let $m_1,m_2 \ge 1$, $s_1,s_2 \ge 1$, and $u_1,u_2 \ge 0$ be integers, and let $\e= (e_1,\ldots,e_{s_1})\in \N^{s_1}$, 
${\bf f}=(f_1,\ldots,f_{s_2})\in \N^{s_2}$. If a digital $(u_1,m_1,\e,s_1)$-net over $\F_q$ and a digital 
$(u_2,m_2,{\bf f},s_2)$-net over $\Field_q$ are given, 
then we can construct a digital $(u,m_1 + m_2,(\e,{\bf f}),s_1 + s_2)$-net over $\Field_q$ with 
$$u=\max\{m_1 + u_2, m_2 + u_1\}$$
and 
$$(\e,{\bf f}) =(e_1,\ldots, e_{s_1}, f_1,\ldots, f_{s_2}) \in \N^{s_1+s_2}.$$
\end{theorem}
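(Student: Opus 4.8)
The plan is to prove this entirely at the level of the associated systems of row vectors, using Lemma~\ref{lemnetsystem} to translate the statement into a combinatorial claim about linear independence. By Lemma~\ref{lemnetsystem}, the hypothesis gives us that the row vectors of the matrices $C_1,\ldots,C_{s_1}$ form an $(m_1-u_1,m_1,\e,s_1)$-system over $\Field_q$, and the row vectors of $D_1,\ldots,D_{s_2}$ (the generating matrices of the second net) form an $(m_2-u_2,m_2,{\bf f},s_2)$-system over $\Field_q$. The goal, again via Lemma~\ref{lemnetsystem}, is to construct $(m_1+m_2)\times(m_1+m_2)$ matrices over $\Field_q$ whose row vectors form an $(m_1+m_2-u,m_1+m_2,(\e,{\bf f}),s_1+s_2)$-system, where $u=\max\{m_1+u_2,m_2+u_1\}$, so that $m_1+m_2-u=\min\{m_1-u_1,m_2-u_2\}$.

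First I would construct the new generating matrices by block embedding: for $1\le i\le s_1$ place $C_i$ in the top-left $m_1\times m_1$ block and pad with zeros to get an $(m_1+m_2)\times(m_1+m_2)$ matrix $\widetilde{C}_i$ whose first $m_1$ rows are the (padded) rows of $C_i$ and whose remaining rows are zero; symmetrically, for $1\le i\le s_2$ place $D_i$ in the bottom-right $m_2\times m_2$ block. The crucial point is that the padded row vectors of the first group live in the coordinate subspace spanned by the first $m_1$ standard basis vectors of $\Field_q^{m_1+m_2}$, while those of the second group live in the complementary subspace spanned by the last $m_2$ coordinates. Because these two subspaces are complementary, a combined family of vectors is linearly independent over $\Field_q$ if and only if the first-group members are independent among themselves \emph{and} the second-group members are independent among themselves.

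Next I would verify the system property directly. Take nonnegative integers $d_1,\ldots,d_{s_1},d_1',\ldots,d_{s_2}'$ with $e_i\mid d_i$, $f_i\mid d_i'$, and total sum at most $m_1+m_2-u=\min\{m_1-u_1,m_2-u_2\}$. We must show the corresponding leading rows of the $\widetilde{C}_i$ and of the embedded $D_i$ are independent. By the complementarity observation this reduces to two independent subproblems: for the first block we need the rows with indices $d_1,\ldots,d_{s_1}$ to be independent, which holds provided $\sum_{i=1}^{s_1}d_i\le m_1-u_1$; for the second block we need $\sum_{i=1}^{s_2}d_i'\le m_2-u_2$. Since the total sum is at most $\min\{m_1-u_1,m_2-u_2\}$, each partial sum is individually bounded by the respective quantity, so both system properties apply and the combined family is independent. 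This establishes that the new matrices generate the desired net.

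The main obstacle to watch is the bookkeeping on the parameter $u$: one must confirm that the bound $m_1+m_2-u=\min\{m_1-u_1,m_2-u_2\}$ arising from $u=\max\{m_1+u_2,m_2+u_1\}$ is exactly what makes both partial-sum constraints simultaneously satisfiable, and not merely sufficient. The verification is routine arithmetic, but it is the conceptual heart of why the maximum (rather than, say, a sum) appears. A secondary subtlety is the degenerate case $d_i=0$ or $d_i'=0$, where the empty subfamily is trivially independent, consistent with the $d=0$ convention built into Definition~\ref{defsystem}; I would note this but not dwell on it, since it introduces no real difficulty.
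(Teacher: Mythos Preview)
Your overall approach matches the paper's exactly: reduce via Lemma~\ref{lemnetsystem} to a claim about $(d,m,\e,s)$-systems, block-embed the two families of row vectors into $\F_q^{m_1+m_2}$, and use the complementarity of the two coordinate subspaces to split the independence check into the two given systems.

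However, your explicit construction contains an error. Placing $D_i$ in the \emph{bottom}-right $m_2\times m_2$ block makes the first $m_1$ rows of that generating matrix identically zero; since Definition~\ref{defsystem} requires linear independence of the \emph{first} $d_i'$ row vectors, any choice with $d_i'\ge 1$ already picks up a zero vector and the system property fails outright. The column placement (last $m_2$ coordinates) is fine and is what drives your complementarity argument, but the row placement must also be at the top. The correction---which is precisely what the paper does---is to set the $j$th row of the new second-block matrix to be $(\bszero,\bsb_j)\in\F_q^{m_1+m_2}$ with $\bszero\in\F_q^{m_1}$ and $\bsb_j$ the $j$th row of $D_i$, padding with zero rows only for large $j$. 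With this adjustment your complementarity argument goes through unchanged.
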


\begin{proof} 
By Lemma~\ref{lemnetsystem} it suffices to show that a $(d_1,m_1,(e_1,\ldots,e_{s_1}),s_1)$-system over $\F_q$ and a 
 $(d_2,m_2,(f_1,\ldots,f_{s_2}),s_2)$-system over $\Field_q$ yield a $(d,m_1 + m_2,(e_1,\ldots, e_{s_1},f_1,\ldots,f_{s_2}), s_1 + s_2)$-system 
over $\Field_q$ with $d=\min\{d_1,d_2\}$. Let
$$A=\left\{\bsa_j^{(i)}\in\Field_q ^{m_1}: 1\le i\le s_1, 1\le j\le m_1\right\}$$
be a $(d_1,m_1,(e_1,\ldots,e_{s_1}),s_1)$-system 
and 
$$B=\left\{\bsb_j^{(i)}\in\Field_q ^{m_2}: 1\le i\le s_2, 1\le j\le m_2\right\}$$
be a $(d_2,m_2,(f_1,\ldots,f_{s_2}),s_2)$-system over $\Field_q$, where we assume, without loss of 
generality, that $m_2\ge m_1$. Now we define the system 
$$C=\left\{\bsc_j^{(i)}\in\Field_q ^{m_1+m_2}: 1\le i\le s_1 + s_2, 1\le j\le m_1+m_2\right\}$$
with
$$\bsc_j^{(i)} :=\begin{cases} 
 		(\bsa_j^{(i)},\bszero) & \mbox{for $1\le i\le s_1,\ 1\le j\le m_1$,}\\
		(\bszero,\bsb_j^{(i-s_1)}) & \mbox{for $s_1< i\le s_1 + s_2,\ 1\le j\le m_1$,}\\
		\bszero & \mbox{for $1\le i\le s_1 + s_2,\ m_1 < j\le m_1 + m_2$.}
                \end{cases}$$
We now show that $C$ is indeed a $(d,m_1 + m_2,(e_1,\ldots, e_{s_1},f_1,\ldots,f_{s_2}), s_1 + s_2)$-system over $\Field_q$.
To this end, choose nonnegative integers $r_1,\ldots,r_{s_1},r_{s_1 +1},\ldots, r_{s_1+s_2}$ with $e_i| r_i$ for $1\le i\le s_1$ and $f_{i-s_1} | r_i$ for 
$s_1 < i\le s_1 + s_2$, and $\sum_{i=1}^{s_1 + s_2} r_i \le d$. We need to check that the vectors 
$$(\bsa_j^{(i)},\bszero),\ 1\le j\le r_i,\ 1\le i\le s_1,$$
and 
$$(\bszero,\bsb_j^{(i-s_1)}),\ 1\le j\le r_i,\ s_1< i\le s_1 + s_2,$$
are linearly independent over $\Field_q$. But this property of $C$ follows immediately by the assumptions made on $A$ and $B$ and the fact 
that $d$ does not exceed $d_1$ or $d_2$.
\end{proof}

The following corollary, which follows immediately from Theorem~\ref{thmdirectproduct} by induction, is a generalization of \cite[Corollary~3]{NX98}. 

\begin{corollary}
Let $q$ be a prime power, let $m_1,\ldots,m_n \ge 1$, $s_1,\ldots,s_n \ge 1$, and $u_1,\ldots,u_n\ge 0$ be integers, 
and let $\e_k= (e_1,\ldots,e_{s_k})\in \N^{s_k}$
for $1\le k\le n$. If digital $(u_k,m_k,\e_k,s_k)$-nets over $\Field_q$ for $1\le k\le n$ are given, then there exists a digital 
$$\left(u,\sum_{k=1}^{n} m_k,\e,\sum_{k=1}^n s_k \right)\mbox{-net}$$
over $\Field_q$ with 
$$u=\sum_{k=1}^n m_k -\min_{1\le k\le n}(m_k-u_k)$$
and 
$$\e=(e_1^{(1)},\ldots, e_{s_1}^{(1)}, e_1^{(2)},\ldots,e_{s_2}^{(2)},\ldots\ldots, e_1^{(n)},
\ldots,e_{s_n}^{(n)}) \in \N^{s_1 + \cdots + s_n}.$$
\end{corollary}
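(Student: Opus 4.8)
The plan is to prove the Corollary by induction on $n$, using Theorem~\ref{thmdirectproduct} as the induction step. The base case $n=1$ is trivial, since the given digital $(u_1,m_1,\e_1,s_1)$-net already has the stated form: when $n=1$, the formula $u = \sum_{k=1}^{1} m_k - \min_{1 \le k \le 1}(m_k-u_k) = m_1 - (m_1-u_1) = u_1$ reduces correctly. For the induction step, I would assume the result holds for $n-1$ nets and combine the resulting net with the $n$th given net via Theorem~\ref{thmdirectproduct}.

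Concretely, suppose by the induction hypothesis that the first $n-1$ nets yield a digital net with parameters
\begin{equation*}
\left(U,\ M,\ \e^{\ast},\ S\right)\mbox{-net over } \Field_q,
\end{equation*}
where $M = \sum_{k=1}^{n-1} m_k$, $S = \sum_{k=1}^{n-1} s_k$, the vector $\e^{\ast}$ is the concatenation of $\e_1,\ldots,\e_{n-1}$, and
\begin{equation*}
U = \sum_{k=1}^{n-1} m_k - \min_{1 \le k \le n-1}(m_k - u_k).
\end{equation*}
I would then apply Theorem~\ref{thmdirectproduct} to this net and the $n$th net, which is a digital $(u_n,m_n,\e_n,s_n)$-net. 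The theorem produces a digital net of dimension $S + s_n = \sum_{k=1}^{n} s_k$ in the same base $q$, with quality parameter
\begin{equation*}
u = \max\{M + u_n,\ m_n + U\},
\end{equation*}
and with $\e$ equal to the concatenation of $\e^{\ast}$ and $\e_n$, which is exactly the claimed vector.

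The only real work is the arithmetic verifying that this $u$ matches the target formula $\sum_{k=1}^{n} m_k - \min_{1 \le k \le n}(m_k - u_k)$. I would rewrite the max as a min of complementary quantities: using $\max\{X,Y\} = (M+m_n) - \min\{(M+m_n)-X,\ (M+m_n)-Y\}$ with $M+m_n = \sum_{k=1}^{n} m_k$. The first inner term $(M+m_n)-(M+u_n) = m_n - u_n$, and the second inner term $(M+m_n)-(m_n+U) = M - U = \min_{1 \le k \le n-1}(m_k-u_k)$ by the induction hypothesis. Hence
\begin{equation*}
u = \sum_{k=1}^{n} m_k - \min\Bigl\{\,m_n - u_n,\ \min_{1 \le k \le n-1}(m_k - u_k)\,\Bigr\} = \sum_{k=1}^{n} m_k - \min_{1 \le k \le n}(m_k - u_k),
\end{equation*}
as required. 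This is the step I expect to demand the most care, since it rests on recognizing that the two terms in the $\max$ correspond, after subtraction from the total, precisely to the candidate minimands, so that the $\max$ of the $u$-parameters transforms into the $\min$ over all $n$ gaps $m_k - u_k$. Since everything else—the dimension count, the base, and the concatenation of the $\e_k$—carries over directly from Theorem~\ref{thmdirectproduct}, no further obstacle arises, and the induction closes.
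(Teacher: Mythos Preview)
Your proposal is correct and matches the paper's own argument: the paper simply states that the corollary ``follows immediately from Theorem~\ref{thmdirectproduct} by induction,'' and you have supplied precisely that induction together with the arithmetic check on $u$ that the paper leaves implicit.
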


Up to now, we have dealt only with propagation rules where one or several point sets or sequences in a certain base were given, and we constructed 
a new point set in the same base. However, it is known that one can derive propagation rules by making a transition from one base to another. 
These propagation rules are called base-change propagation rules.
We now show the following theorem, which is a generalization of Theorem~9 in \cite{NX98} (or Propagation Rule~7 in \cite{N05}). 

\begin{theorem} \label{thmbasechange}
Let $q$ be a prime power, let $m\ge 1$, $s\ge 1$, $u \ge 0$, and $r\ge 1$ be integers, and let 
$\e=(e_1,\ldots,e_s)\in\NN^s$. If there exists a digital $(u,m,\e,s)$-net over $\Field_{q^r}$, 
then there exists a digital $((r-1)m+u,rm,{\bf f},rs)$-net over $\Field_q$, where
$${\bf f}=(\underbrace{e_1,\ldots,e_1}_{\mbox{$r$ times}},
\underbrace{e_2,\ldots,e_2}_{\mbox{$r$ times}},\ldots,\underbrace{e_s,\ldots,e_s}_{\mbox{$r$ times}}).$$
\end{theorem}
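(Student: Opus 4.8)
The plan is to work entirely on the level of generating matrices via the system characterization in Lemma~\ref{lemnetsystem}, reducing the theorem to a purely linear-algebraic statement about $(d,m,\e,s)$-systems. Concretely, I would fix a basis of $\Field_{q^r}$ as an $r$-dimensional vector space over $\Field_q$ and use the induced $\Field_q$-linear isomorphism $\phi:\Field_{q^r}^m \to \Field_q^{rm}$ which replaces each $\Field_{q^r}$-coordinate by its $r$-tuple of $\Field_q$-coordinates. The key point to record is that $\phi$ is $\Field_q$-linear but not $\Field_{q^r}$-linear, so $\Field_{q^r}$-linear independence of a set of vectors in $\Field_{q^r}^m$ does not transfer directly; what survives is only $\Field_q$-linear independence. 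This is exactly the source of the parameter loss that produces the quality $(r-1)m+u$ rather than something better, and handling it correctly is where the argument must be careful.

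The construction I would carry out is as follows. Suppose the given net over $\Field_{q^r}$ has row-vector system $\{\bsa_j^{(i)}\in\Field_{q^r}^m\}$, which by Lemma~\ref{lemnetsystem} is an $(m-u,m,\e,s)$-system over $\Field_{q^r}$. For each $i$ with $1\le i\le s$ and each $j$ with $1\le j\le m$, I would produce $r$ new vectors in $\Field_q^{rm}$ by scaling: pick a basis $\beta_1,\dots,\beta_r$ of $\Field_{q^r}$ over $\Field_q$ and set the new row vectors to be $\phi(\beta_\ell\cdot\bsa_j^{(i)})$ for $\ell=1,\dots,r$. The $rs$ groups of vectors, each of length $rm$, are indexed so that the $r$ copies of the $i$th original coordinate become $r$ consecutive new coordinates, matching the definition of ${\bf f}$. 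I would then verify that these vectors form an $(m-u,rm,{\bf f},rs)$-system over $\Field_q$; by Lemma~\ref{lemnetsystem} this is equivalent to the claim, since $rm-((r-1)m+u)=m-u$.

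The verification is the technical heart and runs as follows. Given nonnegative integers $\delta_{i,\ell}$ (one for each original coordinate $i$ and each of its $r$ copies $\ell$) with the divisibility $e_i\mid \delta_{i,\ell}$ and total sum at most $m-u$, I must show the selected vectors in $\Field_q^{rm}$ are $\Field_q$-linearly independent. I would show that a vanishing $\Field_q$-linear combination pulls back, via $\phi^{-1}$, to a vanishing $\Field_q$-linear combination of the vectors $\beta_\ell\cdot\bsa_j^{(i)}$ in $\Field_{q^r}^m$; collecting the coefficients of the various $\beta_\ell$ for fixed $(i,j)$ turns this into a single $\Field_{q^r}$-linear combination of the $\bsa_j^{(i)}$ with $\Field_{q^r}$-coefficients. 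The crucial counting step is that the number of original vectors $\bsa_j^{(i)}$ thereby involved, say with $j\le d_i$ where $d_i$ is chosen compatibly with the $\delta_{i,\ell}$, must satisfy $e_i\mid d_i$ and $\sum_i d_i\le m-u$, so that the $(m-u,m,\e,s)$-system property over $\Field_{q^r}$ forces all $\Field_{q^r}$-coefficients to vanish. I expect the main obstacle to be precisely the bookkeeping that matches the $\Field_q$-indices $(i,\ell,j)$ with the $\Field_{q^r}$-index, ensuring that the total number of original rows used is controlled by $\sum\delta_{i,\ell}\le m-u$ and that the divisibility conditions translate correctly; once $\Field_{q^r}$-independence is invoked, vanishing of every coefficient in the $\beta_\ell$-basis (again by $\Field_q$-linear independence of the $\beta_\ell$) yields the desired $\Field_q$-independence and completes the proof.
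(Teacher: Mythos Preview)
Your proposal is correct and follows essentially the same approach as the paper: both reduce via Lemma~\ref{lemnetsystem} to showing that from an $(m-u,m,\e,s)$-system over $\Field_{q^r}$ one obtains an $(m-u,rm,{\bf f},rs)$-system over $\Field_q$ by sending $\bsa_j^{(i)}$ to the $r$ vectors $\varphi(\beta_k\bsa_j^{(i)})$, and both verify the system property by pulling a vanishing $\Field_q$-combination back through $\varphi^{-1}$ and collecting the $\beta_k$-coefficients into $\Field_{q^r}$-coefficients. The only detail you leave implicit that the paper spells out is the choice $d_i=\max_{1\le \ell\le r}\delta_{i,\ell}$, which guarantees both $e_i\mid d_i$ and $\sum_i d_i\le\sum_{i,\ell}\delta_{i,\ell}\le m-u$; you also need to pad the new matrices with zero rows for $m<j\le rm$, but this is routine.
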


\begin{proof}  
By Lemma~\ref{lemnetsystem} it suffices to show that we can obtain a $(d,rm,{\bf f},rs)$-system over $\Field_q$ from a
$(d,m,\e,s)$-system over $\Field_{q^r}$. To this end, let
$$ A=\left\{\bsa_j^{(i)}\in\Field_{q^r}^m: 1\le i\le s, 1\le j\le m\right\} $$
be a $(d,m,\e,s)$-system over $\Field_{q^r}$. Choose an ordered basis $\beta_1,\ldots,\beta_r$ of $\Field_{q^r}$ over $\Field_q$ and an 
$\Field_q$-linear isomorphism $\varphi: \Field_{q^r}^m\To\Field_q^{rm}$. Define now a system 
$$ B=\left\{\bsb_j^{(h)}\in\Field_{q}^{rm}: 1\le h\le rs, 1\le j\le rm\right\}$$
by setting, for $1\le i\le s$ and $1\le k\le r$,
$$\bsb_j^{((i-1)r+k)}:=\begin{cases}
                     \varphi (\beta_k\bsa_j^{(i)}) & \mbox{for}\ 1\le j\le m,\\
                     \bszero & \mbox{for}\ m< j\le rm.
                    \end{cases}$$
We claim that $B$ is a $(d,rm,{\bf f},rs)$-system over $\Field_q$. Indeed, choose nonnegative integers $d_{i,k}$ for $1 \le i \le s$, $1 \le k \le r$, with 
the properties $e_i | d_{i,k}$ for $1\le i\le s$, $1\le k\le r$,  and 
$$ \sum_{i=1}^s \sum_{k=1}^r d_{i,k}\le d. $$
Suppose that 
\begin{equation}\label{eqsupposezero}
 \sum_{i=1}^s \sum_{k=1}^r\sum_{j=1}^{d_{i,k}} c_j^{(i,k)} \bsb_j^{((i-1)r +k)}=\bszero\in\Field_q^{rm},
\end{equation}
where all $c_j^{(i,k)}\in\Field_q$. For $1\le i\le s$ define
$ d_i:=\max_{1\le k\le r} d_{i,k}, $
and note that $d_i\le m$, as $d_{i,k}\le d\le m$, and that $e_i | d_i$. 

For $1\le i\le s$ and $1\le k\le r$, let $h_j^{(i,k)}:=1\in\Field_q$ for $1\le j\le d_{i,k}$ and 
$h_j^{(i,k)}=0\in\Field_q$ for $d_{i,k} < j\le d_i$. Then we can write~\eqref{eqsupposezero} as 
$$\sum_{i=1}^s \sum_{k=1}^r\sum_{j=1}^{d_{i}} h_j^{(i,k)} c_j^{(i,k)} \varphi(\beta_k \bsa_j^{(i)})=\bszero\in\Field_q^{rm},$$
from which we conclude, due to the properties of $\varphi$, that 
$$ \sum_{i=1}^s \sum_{j=1}^{d_i} \gamma_j^{(i)} \bsa_j^{(i)}=\bszero\in\Field_{q^r}^m$$
with
$$\gamma_j^{(i)}=\sum_{k=1}^r h_j^{(i,k)} c_j^{(i,k)} \beta_k \in\Field_{q^r}. $$
Note now that 
$$\sum_{i=1}^s d_i \le \sum_{i=1}^s \sum_{k=1}^r d_{i,k}\le d.$$
As we have $e_i | d_i$ for $1\le i\le s$, and since we assumed $A$ to be a $(d,m,\e,s)$-system over $\Field_{q^r}$, 
we must have $\gamma_j^{(i)}=0$ for $1\le j\le d_i$, $1\le i\le s$, and therefore 
also $h_j^{(i,k)}c_j^{(i,k)}=0$ for $1\le j\le d_i$, $1\le i\le s$, $1\le k\le r$. Therefore we see that all coefficients $c_j^{(i,k)}$  
in~\eqref{eqsupposezero} must be equal to $0$.
\end{proof}

We also generalize the following propagation rule that is called ``base reduction for projective spaces'' in~\cite{MinT},
by proving the following result.

\begin{theorem} \label{thmprojective}
Let $q$ be a prime power, let $m\ge 1$, $s\ge 1$, $u \ge 0$, and $r\ge 2$ be integers, and let 
$\e=(e_1,\ldots,e_s)\in\NN^s$. If there exists a digital $(u,m,\e,s)$-net 
over $\Field_{q^r}$, then there exists a digital $((r-1)m-(r-1)+u,rm-(r-1),\e,s)$-net over $\Field_q$.
\end{theorem}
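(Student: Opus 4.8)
The plan is to mimic the proofs of Theorems~\ref{thmdirectproduct} and~\ref{thmbasechange}: by Lemma~\ref{lemnetsystem} it suffices to turn a $(d,m,\e,s)$-system over $\Field_{q^r}$ into a $(d,rm-(r-1),\e,s)$-system over $\Field_q$, where $d=m-u$. Indeed, writing $M=rm-(r-1)$ and $U=(r-1)m-(r-1)+u$ one checks $M-U=m-u=d$, so the target strength matches the given system. The decisive difference from Theorem~\ref{thmbasechange} is that here the dimension $s$ \emph{and} the vector $\e$ must be preserved, while the ambient dimension is to grow only to $r(m-1)+1$ rather than by a full factor $r$.

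As a stepping stone I would first carry out the plain field-reduction blow-up into $\Field_q^{rm}$. Fix an ordered basis $\beta_1,\ldots,\beta_r$ of $\Field_{q^r}$ over $\Field_q$ and an $\Field_q$-linear isomorphism $\varphi:\Field_{q^r}^m\To\Field_q^{rm}$, and attach to each row $\bsa_j^{(i)}$ the $r$ vectors $\varphi(\beta_k\bsa_j^{(i)})$, ordered by $j$ and then $k$. Given new indices $d_1,\ldots,d_s$ with $e_i\mid d_i$ and $\sum d_i\le d$, the first $d_i$ vectors in coordinate $i$ involve only the source rows $\bsa_1^{(i)},\ldots,\bsa_{p_i}^{(i)}$ with $p_i\le d_i$; matching this with the \emph{same} index $d_i$ in the given system (this is exactly what keeps $\e$ unchanged instead of inflating it to $r\e$) makes all the involved rows $\Field_{q^r}$-linearly independent. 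Hence the corresponding $\Field_{q^r}$-lines $\Field_{q^r}\bsa_j^{(i)}$ are in direct sum, so their $\Field_q$-pieces are $\Field_q$-independent, and within each line the chosen multiples $\beta_k\bsa_j^{(i)}$ are $\Field_q$-independent since $\{\beta_1,\ldots,\beta_r\}$ is a basis. This already yields a $(d,rm,\e,s)$-system over $\Field_q$, i.e.\ the weaker net with $rm$ columns.

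It remains to \emph{shorten} this system by $r-1$ columns, which is the genuinely ``projective'' content. I would replace $\varphi$ by an $\Field_q$-linear surjection $\Phi:\Field_{q^r}^m\To\Field_q^{rm-(r-1)}$ with a suitable kernel $K=\ker\Phi$ of dimension $r-1$, and apply $\Phi$ in place of $\varphi$ throughout the blow-up. The independence argument of the previous paragraph goes through verbatim provided $\Phi$ is injective on the span $V$ of every admissible selection; equivalently, one needs $K\cap V=\{\bszero\}$ for each of the finitely many relevant spans $V$.

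The main obstacle is precisely the existence of a single such $K$, valid for \emph{all} $q$ (a generic choice is not available when $q$ is small, since a finite union of proper $\Field_q$-subspaces can exhaust the whole space). The structural fact that rescues the construction is that each relevant $V$ is the $\Field_q$-span of an $\Field_{q^r}$-linearly independent family, hence meets every line $\Field_{q^r}w$ in at most a one-dimensional $\Field_q$-subspace -- an ``arc'' condition. Interpreting the lines $\Field_{q^r}w$ as the elements of the field-reduction spread in $\mathrm{PG}(rm-1,q)$, one seeks $K$ as a $\mathrm{PG}(r-2,q)$ lying in one spread element and avoiding the (finitely many) intersection points contributed by the various $V$; establishing that such a $K$ exists -- in general one must allow $K$ to depend on the given system $A$ rather than on $q$ alone -- is the delicate step, and it is exactly here that projective-space geometry enters and gives the rule its name. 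Once $K$ is produced, the verification of the $(d,rm-(r-1),\e,s)$-system property is the line-decomposition argument above, and Lemma~\ref{lemnetsystem} returns the asserted digital net.
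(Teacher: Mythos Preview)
Your first step --- the blow-up $\bsa_j^{(i)}\mapsto\big(\varphi(\beta_k\bsa_j^{(i)})\big)_{k=1}^{r}$ producing a $(d,rm,\e,s)$-system over $\F_q$ --- is correct. The gap is in the shortening step. Your ``arc condition'' is false: whenever some $d_i\ge r$, the admissible selection for coordinate $i$ already contains the full batch $\beta_1\bsa_1^{(i)},\ldots,\beta_r\bsa_1^{(i)}$, so the $\F_q$-span $V$ contains the entire line $\F_{q^r}\bsa_1^{(i)}$ and meets it in $\F_q$-dimension $r$, not $\le 1$. (More generally, the family you are spanning is \emph{not} $\F_{q^r}$-linearly independent, contrary to what you write: $\beta_1\bsa_1^{(i)}$ and $\beta_2\bsa_1^{(i)}$ are proportional over $\F_{q^r}$.) Thus your proposed method for exhibiting $K$ --- placing it inside a single spread line and dodging finitely many $1$-dimensional intersections --- collapses, and the existence of $K$ remains unproved. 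For small $q$ and large $s$ the number of admissible $V$'s can be enormous, so a naive counting argument does not rescue this either.

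The paper's route avoids this difficulty entirely by a device you did not consider: it uses the \emph{single-vector} embedding $\bsa_j^{(i)}\mapsto\varphi(\bsa_j^{(i)})$ (one row per original row, not $r$), after first \emph{rescaling} each row of each generating matrix so that its first entry lies in $\{0,1\}\subseteq\F_q\subseteq\F_{q^r}$. With this normalization and with $\psi:\F_{q^r}\to\F_q^r$ chosen so that $\psi(1)=(0,\ldots,0,1)$, every image $\varphi(\bsa_j^{(i)})\in\F_q^{rm}$ has its first $r-1$ coordinates equal to zero. One then simply deletes those $r-1$ coordinates; linear independence is preserved because only zeros were removed. The verification that the single-vector images form an $(m-u,rm,\e,s)$-system over $\F_q$ is immediate, since an $\F_q$-linear relation among the $\varphi(\bsa_j^{(i)})$ is, via $\F_q\subseteq\F_{q^r}$, also an $\F_{q^r}$-linear relation among the $\bsa_j^{(i)}$. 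So the ``projective'' saving of $r-1$ columns is achieved by an explicit normalization rather than by an existence argument for a kernel, and no spread geometry is needed.
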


\begin{proof} 
The result can be shown by an adaptation of the proof of Theorem~\ref{thmbasechange}. 
Let a digital $(u,m,\e,s)$-net over $\Field_{q^r}$ be given and let $C_1,\ldots,C_s$ be its generating matrices. 
Note that the linear independence conditions in the definition of a digital $(u,m,\e,s)$-net stay unchanged if we 
multiply a row of a matrix $C_i$ with some nonzero element of $\Field_{q^r}$. Doing so, we can obtain generating 
matrices $A_1,\ldots,A_s$ over $\Field_{q^r}$ which also generate a digital $(u,m,\e,s)$-net over $\F_{q^r}$ and for which the first column 
of each $A_i$, $1\le i\le s$, consists only of zeros and ones. Let $\bsa_{j}^{(i)}$, $1\le j\le m$, $1\le i\le s$, be the row vectors 
of the matrices $A_i$. By Lemma~\ref{lemnetsystem}, the system
$$A=\{\bsa_j^{(i)}\in\Field_{q^r}^m: 1\le i\le s, 1\le j\le m\}$$
is an $(m-u,m,\e,s)$-system over $\Field_{q^r}$. Let now $\psi$ be an $\Field_q$-linear isomorphism from $\Field_{q^r}$ to $\Field_q^r$ 
such that $\psi (1)= (0,\ldots,0,1)\in\Field_q^r$. For a vector $\bsa\in\Field_{q^r}^m$ with $\bsa=(\alpha_1,\ldots,\alpha_m)$, put
$\varphi (\bsa)=(\psi (\alpha_1),\ldots,\psi (\alpha_m))\in\Field_q^{rm}$. 

Define now a new system 
$$B=\{\bsb_j^{(i)}\in\Field_q^{rm}: 1\le i\le s, 1\le j\le rm\} $$
by setting, for $1\le i\le s$,
$$\bsb_j^{(i)}=\begin{cases}
                \varphi (\bsa_j^{(i)}) & \mbox{for $1\le j\le m$,}\\
                \bszero & \mbox{for $m< j\le rm$.}
               \end{cases}$$
We are now going to show that $B$ is an $(m-u,rm,\e,s)$-system over $\Field_q$. Choose nonnegative integers $d_1,\ldots,d_s$ with 
the properties $e_i | d_{i}$ for $1\le i\le s$  and $\sum_{i=1}^s d_{i}\le m-u$.
Suppose that 
\begin{equation}\label{eqsupposezero2}
 \sum_{i=1}^s \sum_{j=1}^{d_{i}} c_j^{(i)} \bsb_j^{(i)}=\bszero\in\Field_q^{rm},
\end{equation}
where all $c_j^{(i)}\in\Field_q$. Then we can write~\eqref{eqsupposezero2} as 
$$\sum_{i=1}^s \sum_{j=1}^{d_{i}} c_j^{(i)} \varphi(\bsa_j^{(i)})=\bszero\in\Field_q^{rm},$$
from which we conclude, due to the properties of $\varphi$, that 
$$ \sum_{i=1}^s \sum_{j=1}^{d_i} c_j^{(i)} \bsa_j^{(i)}=\bszero\in\Field_{q^r}^m.$$
As we have $\sum_{i=1}^s d_{i}\le m-u$ and $e_i | d_i$ for $1\le i\le s$, and since we assumed $A$ to be an $(m-u,m,\e,s)$-system over $\Field_{q^r}$, 
we must have $c_j^{(i)}=0$ for $1\le j\le d_i$, $1\le i\le s$, in~\eqref{eqsupposezero2}. Consequently, 
$B$ is indeed an $(m-u,rm,\e,s)$-system over $\Field_q$.

Note now that the first $r-1$ coordinates of each $\bsb_j^{(i)}$ are equal to zero, due to the choices of $\psi$ and $\varphi$ and due to the 
fact that the first coordinates of the $\bsa_j^{(i)}$ are all either zero or one. Furthermore, note that 
$\bsb_{rm-(r-2)}^{(i)},\bsb_{rm-(r-3)}^{(i)},\ldots,\bsb_{rm}^{(i)}$ are all $\bszero$, as we assumed $r\ge 2$. 

We now remove $\bsb_{rm-(r-2)}^{(i)},\bsb_{rm-(r-3)}^{(i)},\ldots,\bsb_{rm}^{(i)}$ from $B$ and 
discard for each of the remaining $\bsb_j^{(i)}\in B$ its first $r-1$ coordinates. In this way we end up with a system 
$$D=\{\bsd_j^{(i)}\in\Field_q^{rm - (r-1)}: 1\le i\le s, 1\le j\le rm - (r-1)\},$$
where the $\bsd_j^{(i)}$ are the projections of the original $\bsb_j^{(i)}$ onto their last $rm- (r-1)$ coordinates. However, the linear 
independence properties of the vectors in $D$ are the same as those of the vectors in $B$, since we removed only zeros to derive $D$ from $B$. 
Hence, $D$ is an $(m-u,rm- (r-1),\e,s)$-system over $\Field_q$. By Lemma~\ref{lemnetsystem}, the vectors in $D$ generate a 
digital $((r-1)m-(r-1)+u,rm-(r-1),\e,s)$-net over $\Field_{q}$, as claimed.
\end{proof}

Now we establish the digital analog of the propagation rule in Proposition~\ref{prpr}. In the classical case of digital $(t,m,s)$-nets, this propagation rule
was shown in~\cite{SW} (see also \cite[Theorem~4.60]{DP}).

\begin{proposition} \label{prprd}
Let $q$ be a prime power, let $m \ge 1$, $s \ge 1$, and $u \ge 0$ be integers, and let $\e =(e_1,\ldots,e_s) \in \N^s$. 
If a digital $(u,m,\e,s)$-net over $\F_q$
is given, then for every integer $k$ with $\max (1,u) \le k \le m$ such that $m-k$ is a linear combination of $e_1,\ldots,e_s$ 
with coefficients from $\N_0$,
we can construct a digital $(u,k,\e,s)$-net over $\F_q$.
\end{proposition}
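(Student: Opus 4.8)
The plan is to pass to the language of $(d,m,\e,s)$-systems via Lemma~\ref{lemnetsystem} and to mimic, on the level of row vectors, the geometric ``select-and-rescale'' argument from the proof of Proposition~\ref{prpr}. By Lemma~\ref{lemnetsystem}, the generating matrices of the given net yield an $(m-u,m,\e,s)$-system $A=\{\bsa_j^{(i)}\in\F_q^m\}$, and it suffices to produce a $(k-u,k,\e,s)$-system over $\F_q$, since such a system is generated by $k\times k$ matrices which then generate a digital $(u,k,\e,s)$-net. Writing $m-k=\sum_{i=1}^s f_ie_i$ with $f_i\in\N_0$, the rescaling by $b^{f_ie_i}$ in the $i$th coordinate used in Proposition~\ref{prpr} corresponds to discarding the first $f_ie_i$ row vectors $\bsa_1^{(i)},\ldots,\bsa_{f_ie_i}^{(i)}$ in each family and retaining $\bsa_{f_ie_i+1}^{(i)},\ldots,\bsa_{f_ie_i+k}^{(i)}$ (note that $f_ie_i+k\le m$), while the restriction to the subcube $E$ corresponds to intersecting with the subspace $W\subseteq\F_q^m$ annihilated by the discarded vectors.

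More precisely, I would view each $\bsa_j^{(i)}$ as a linear functional on $\F_q^m$ and let $W$ be the common kernel of the $m-k$ functionals $\bsa_{j'}^{(i)}$, $1\le j'\le f_ie_i$, $1\le i\le s$. Since $\sum_i f_ie_i=m-k\le m-u$ and $e_i\mid f_ie_i$, the system property of $A$ guarantees that these $m-k$ functionals are linearly independent, so they form a basis of the annihilator $W^\perp$ and $\dim W=k$. Fixing an $\F_q$-linear isomorphism $W\cong\F_q^k$, I would define $\bsd_j^{(i)}\in\F_q^k$ to be the restriction of $\bsa_{f_ie_i+j}^{(i)}$ to $W$, for $1\le j\le k$, $1\le i\le s$, and claim that $\{\bsd_j^{(i)}\}$ is the desired $(k-u,k,\e,s)$-system.

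The verification rests on one elementary fact: a family of functionals restricted to $W$ is linearly independent if and only if that family together with a basis of $W^\perp$ is linearly independent, because a combination of the restrictions vanishes on $W$ exactly when the corresponding combination of the functionals lies in $W^\perp$. I would apply this to any admissible choice $e_i\mid d_i$, $\sum_i d_i\le k-u$: the restrictions $\bsd_j^{(i)}$, $1\le j\le d_i$, are linearly independent if and only if the vectors $\bsa_{f_ie_i+j}^{(i)}$, $1\le j\le d_i$, together with the basis $\{\bsa_{j'}^{(i)}:1\le j'\le f_ie_i\}$ of $W^\perp$ are linearly independent, i.e.\ if and only if $\{\bsa_{j'}^{(i)}:1\le j'\le f_ie_i+d_i,\ 1\le i\le s\}$ is linearly independent. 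Setting $d_i':=f_ie_i+d_i$, we have $e_i\mid d_i'$ and $\sum_i d_i'=(m-k)+\sum_i d_i\le m-u$, so this independence is precisely what $A$ being an $(m-u,m,\e,s)$-system provides. Hence $\{\bsd_j^{(i)}\}$ is a $(k-u,k,\e,s)$-system, and Lemma~\ref{lemnetsystem} completes the argument.

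I expect the only genuine obstacle to be the dimension reduction from $\F_q^m$ to $\F_q^k$: in contrast to the purely combinatorial index bookkeeping $d_i\mapsto f_ie_i+d_i$, the passage to vectors of length $k$ must preserve exactly the relevant independences, and this is where the annihilator description of $W$ and the restriction criterion above carry the load. Some routine care is also needed to confirm that the hypothesis $k\ge\max(1,u)$ makes the target a genuine system (so that $0\le k-u\le k$ and $k\ge 1$) and that $f_ie_i+k\le m$, ensuring the chosen row indices are valid.
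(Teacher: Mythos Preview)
Your argument is correct and self-contained. The key steps---that the $m-k$ discarded row vectors are linearly independent by the system property (since $\sum_i f_ie_i=m-k\le m-u$ and $e_i\mid f_ie_i$), that $\dim W=k$, and that independence of the restrictions $\bsd_j^{(i)}$ is equivalent to independence of the $\bsa_{j'}^{(i)}$ with $1\le j'\le f_ie_i+d_i$---all go through as you state them. The index bound $f_ie_i+k\le m$ follows from $f_ie_i\le\sum_l f_le_l=m-k$, and the divisibility $e_i\mid d_i'$ is immediate.

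The paper takes a different route. Rather than handling the full decomposition $m-k=\sum_i f_ie_i$ at once, it reduces by induction to the case where $m-k$ is a multiple of a single $e_i$ (say $e_s$ after a coordinate permutation), replaces the irrelevant tail of $C_s$ so that its rows form a basis of $\F_q^m$, changes coordinates to make $C_s$ antidiagonal, and then quotes the classical argument for digital $(t,m,s)$-nets from \cite[Theorem~4.60]{DP}, checking only that the $m-k$ auxiliary unit vectors appearing there respect the divisibility by $e_s$. Your approach is more conceptual and mirrors the geometric proof of Proposition~\ref{prpr} directly at the level of functionals, avoiding both the induction and the appeal to an external reference; the paper's approach has the virtue of making explicit that the classical proof carries over with essentially one extra divisibility check.
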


\begin{proof}
It suffices to consider the case where $m-k$ is divisible by some $e_i$, since we can then proceed by induction. By using a permutation of the 
coordinates, we can
assume that $e_s$ divides $m-k$. In the generating matrix $C_s$ of the given digital $(u,m,\e,s)$-net over $\F_q$, the first $e_s \lfloor (m-u)/e_s \rfloor$ row vectors
are linearly independent over $\F_q$, whereas the remaining row vectors do not matter (compare with Lemma~\ref{lemnetsystem}). 
Therefore we can assume that the row
vectors of $C_s$ form a basis of $\F_q^m$, and by changing the coordinate system in $\F_q^m$ we can take $C_s$ to be the antidiagonal matrix $E_m^{\prime}$ in the
proof of \cite[Theorem~4.60]{DP}. Now we can imitate that proof (but note that $n$ in that proof plays the role of our $k$ and that the condition $d_1 + \cdots + d_s
=n-t$ is now replaced by $d_1 + \cdots + d_s \le k-u$). The only point we need to observe is that the number $m-k$ of auxiliary unit vectors in the displayed scheme
of vectors in \cite[p.~157]{DP} (namely the unit vectors with the single coordinate $1$ between position $k+1$ and position $m$) must be divisible by $e_s$. But this
is guaranteed by our assumption.
\end{proof} 

Next we show the digital analog of Proposition~\ref{prsn}. First we reformulate Proposition~\ref{propdigseq} in the language of $(d,m,\e,s)$-systems. 

\begin{lemma} \label{lemsnd}
Let $q$ be a prime power, let $s \ge 1$ and $u \ge 0$ be integers, and let $\e \in \N^s$. Then the $\infty \times \infty$ matrices $C_1,\ldots,C_s$ over $\F_q$ generate
a digital $(u,\e,s)$-sequence over $\F_q$ if and only if, for every integer $m \ge \max (1,u)$, the system of row vectors of the matrices $C_1^{(m)},\ldots,C_s^{(m)}$ 
is an $(m-u,m,\e,s)$-system over $\F_q$.
\end{lemma}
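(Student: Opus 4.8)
The plan is to obtain the statement by composing two equivalences that are already at our disposal, namely Proposition~\ref{propdigseq} and Lemma~\ref{lemnetsystem}, and then to reconcile the precise range of $m$ in which the condition is to be verified. No new combinatorial or algebraic work should be needed; the content is purely a matter of translating the net-level characterization into the language of $(d,m,\e,s)$-systems and tracking the index bounds carefully.

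Concretely, I would first apply Proposition~\ref{propdigseq}: the matrices $C_1,\ldots,C_s$ generate a digital $(u,\e,s)$-sequence over $\F_q$ if and only if, for every integer $m>u$, the left upper $m\times m$ submatrices $C_1^{(m)},\ldots,C_s^{(m)}$ generate a digital $(u,m,\e,s)$-net over $\F_q$. Since every such $m$ satisfies $m\ge 1$, I can then invoke Lemma~\ref{lemnetsystem} for each fixed $m$ in this range: the matrices $C_1^{(m)},\ldots,C_s^{(m)}$ generate a digital $(u,m,\e,s)$-net over $\F_q$ exactly when the system of their row vectors is an $(m-u,m,\e,s)$-system over $\F_q$. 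Chaining these two biconditionals gives that the $C_i$ generate a digital $(u,\e,s)$-sequence if and only if, for every integer $m>u$, the row vectors of $C_1^{(m)},\ldots,C_s^{(m)}$ form an $(m-u,m,\e,s)$-system over $\F_q$.

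The one point that needs care, and the step I would flag as the only real (if minor) obstacle, is that the statement quantifies over $m\ge\max(1,u)$ rather than over $m>u$, so these two ranges must be shown to impose the same requirement. If $u=0$, both $m>u$ and $m\ge\max(1,u)$ reduce to $m\ge 1$, and there is nothing to check. If $u\ge 1$, the range $m\ge\max(1,u)=u$ exceeds the range $m>u$ only by the single value $m=u$; but for $m=u$ we have $m-u=0$, and by Definition~\ref{defsystem} the property of being a $(0,m,\e,s)$-system is trivially satisfied by any system of vectors. Hence the extra instance $m=u$ carries no information, so the quantifications ``for every $m\ge\max(1,u)$'' and ``for every $m>u$'' are interchangeable, and the equivalence derived above is precisely the asserted one.
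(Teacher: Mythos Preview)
Your proposal is correct and is precisely the intended argument: the paper does not give a separate proof but introduces the lemma as a direct reformulation of Proposition~\ref{propdigseq} via Lemma~\ref{lemnetsystem}, and your handling of the boundary case $m=u$ (where the $(0,m,\e,s)$-system condition is vacuous by Definition~\ref{defsystem}) correctly reconciles the two index ranges.
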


\begin{proposition} \label{prsnd}
Let $q$ be a prime power, let $s \ge 1$ and $u \ge 0$ be integers, and let $\e =(e_1,\ldots,e_s) \in \N^s$. If a digital $(u,\e,s)$-sequence over $\F_q$ is given, 
then for every integer $m \ge \max (1,u)$ we can construct a digital $(u,m,\e^{\prime},s+1)$-net over $\F_q$, where $\e^{\prime}=(1,e_1,\ldots,e_s) \in \N^{s+1}$.
\end{proposition}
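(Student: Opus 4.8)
The plan is to recast both the hypothesis and the conclusion in the language of $(d,m,\e,s)$-systems and then to write down explicit generating matrices for the net. Imitating the non-digital construction in Proposition~\ref{prsn}, where the freshly adjoined first coordinate is $nb^{-m}$, I would take as the $s+1$ generating matrices of the putative net the matrices $C_0,C_1^{(m)},\ldots,C_s^{(m)}$, where $C_i^{(m)}$ is the left upper $m\times m$ submatrix of the $i$th matrix $C_i$ of the given \se, and $C_0$ is the $m\times m$ anti-diagonal matrix over $\F_q$ (with $1$'s along the anti-diagonal). Since the base-$q$ digit expansion of $nq^{-m}=\sum_{j=1}^m n_{m-j}q^{-j}$ reads the digits of $n$ in reversed order, $C_0$ is exactly the generating matrix producing the first coordinate $nq^{-m}$; thus this net is the digital realization of the point set of Proposition~\ref{prsn}. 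By Lemma~\ref{lemnetsystem} it then suffices to prove that the row system of $C_0,C_1^{(m)},\ldots,C_s^{(m)}$ is an $(m-u,m,\e^{\prime},s+1)$-system over $\F_q$.

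Concretely, I would fix nonnegative integers $d_0,d_1,\ldots,d_s$ with $e_i\mid d_i$ for $1\le i\le s$ and $d_0+d_1+\cdots+d_s\le m-u$, and show that the first $d_0$ rows of $C_0$ together with the first $d_i$ rows of $C_i^{(m)}$, $1\le i\le s$, are linearly independent over $\F_q$. It is essential here that the first entry of $\e^{\prime}$ equals $1$, so that the divisibility condition places \emph{no} restriction on $d_0$; this is what lets $d_0$ range freely over $\{0,\ldots,m-u\}$, matching the arbitrary splitting resolution of the first coordinate. The key structural observation is that the first $d_0$ rows of the anti-diagonal matrix $C_0$ are precisely the standard basis vectors $\boldsymbol{g}_m,\boldsymbol{g}_{m-1},\ldots,\boldsymbol{g}_{m-d_0+1}$, which span the coordinate subspace $W$ of all vectors supported on the last $d_0$ coordinates. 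Hence the whole collection is linearly independent if and only if the images of the vectors taken from $C_1^{(m)},\ldots,C_s^{(m)}$ under the projection $\F_q^m\to\F_q^{m-d_0}$ onto the first $m-d_0$ coordinates are linearly independent. Because $d_0+\sum_{i=1}^s d_i\le m-u\le m$ gives $d_i\le m-d_0$ for each $i$, this projection carries the first $d_i$ rows of $C_i^{(m)}$ to exactly the first $d_i$ rows of $C_i^{(m-d_0)}$.

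It then remains to invoke the \se hypothesis at the reduced level $m-d_0$. Since $\sum_{i=1}^s d_i\le (m-d_0)-u$ with $e_i\mid d_i$, Lemma~\ref{lemsnd} applied with $m$ replaced by $m-d_0$ guarantees that the row system of $C_1^{(m-d_0)},\ldots,C_s^{(m-d_0)}$ is an $((m-d_0)-u,\,m-d_0,\,\e,\,s)$-system, which delivers exactly the required linear independence. The one point demanding care, and the main obstacle of the argument, is the applicability of Lemma~\ref{lemsnd} at level $m-d_0$, since that lemma requires $m-d_0\ge\max(1,u)$. From $d_0\le m-u$ one always has $m-d_0\ge u$, so the only problematic situation is $m-d_0=0$, which forces $u=0$ and $d_1=\cdots=d_s=0$; in that degenerate case only the first $m$ rows of $C_0$ survive, and these are all $m$ standard basis vectors, hence trivially independent. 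With this borderline case dispatched separately, the reduction closes and the row system of $C_0,C_1^{(m)},\ldots,C_s^{(m)}$ is the desired $(m-u,m,\e^{\prime},s+1)$-system, completing the construction.
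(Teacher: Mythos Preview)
Your proof is correct and follows essentially the same approach as the paper's: choose a right-lower-triangular matrix for the new coordinate, project onto the first $m-d_0$ coordinates to kill its contribution, and invoke Lemma~\ref{lemsnd} at the reduced level $m-d_0$. The only cosmetic differences are that the paper allows $C_1$ to be any nonsingular right lower triangular matrix (you pick the anti-diagonal one specifically, which is perfectly fine and makes the digital analogy with Proposition~\ref{prsn} transparent), and the paper's boundary case is $d_0=m-u$ rather than your $m-d_0=0$; both case splits are valid and lead to the same conclusion.
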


\begin{proof}
Let $D_1,\ldots,D_s$ be $\infty \times \infty$ generating matrices over $\F_q$ of the given digital $(u,\e,s)$-sequence over $\F_q$. Fix an integer $m \ge \max (1,u)$.
We define $m \times m$ generating matrices $C_1,\ldots,C_{s+1}$ over $\F_q$ by letting $C_1$ be a nonsingular right lower triangular matrix over $\F_q$ and by setting
$C_i=D_{i-1}^{(m)}$ for $2 \le i \le s+1$. By Lemma~\ref{lemnetsystem} it suffices to show that the system of row vectors of the matrices $C_1,\ldots,C_{s+1}$ is an
$(m-u,m,\e^{\prime},s+1)$-system over $\F_q$. To this end, we choose $d_1,\ldots,d_{s+1} \in \N_0$ with $e_i|d_{i+1}$ for $1 \le i \le s$ and $\sum_{i=1}^{s+1} d_i
\le m-u$, and we have to prove that the row vectors $\bsc_j^{(i)}$, $1 \le j \le d_i$, $1 \le i \le s+1$, of the matrices $C_1,\ldots,C_{s+1}$ are linearly independent
over $\F_q$. If $d_1=m-u$, then $d_i=0$ for $2 \le i \le s+1$, and we are done since $C_1$ is nonsingular. Hence we can assume that $d_1 < m-u$. Suppose that we have  
\begin{equation} \label{eqla}
\sum_{i=1}^{s+1} \sum_{j=1}^{d_i} b_j^{(i)} \bsc_j^{(i)} = \bszero \in \F_q^m
\end{equation}
with all $b_j^{(i)} \in \F_q$. Let $\pi : \F_q^m \to \F_q^{m-d_1}$ be the projection to the first $m-d_1$ coordinates of a vector in $\F_q^m$. Then $\pi (\bsc_j^{(1)})=
\bszero \in \F_q^{m-d_1}$ for $1 \le j \le d_1$ since $C_1$ is a right lower triangular matrix, and so applying $\pi$ to~\eqref{eqla} we obtain
\begin{equation} \label{eqlb}
\sum_{i=2}^{s+1} \sum_{j=1}^{d_i} b_j^{(i)} \pi(\bsc_j^{(i)})=\bszero \in \F_q^{m-d_1}.
\end{equation}
Now the vectors $\pi(\bsc_j^{(i)})$, $1 \le j \le d_i$, $2 \le i \le s+1$, are row vectors of the matrices $D_1^{(m-d_1)},\ldots,D_s^{(m-d_1)}$. The system of all row vectors
of these matrices forms an $(m-d_1-u,m-d_1,\e,s)$-system over $\F_q$ by Lemma~\ref{lemsnd}. By observing that $\sum_{i=2}^{s+1} d_i \le m-d_1-u$, we conclude from~\eqref{eqlb}
that $b_j^{(i)}=0$ for $1 \le j \le d_i$, $2 \le i \le s+1$, and returning to~\eqref{eqla} we see that $b_j^{(1)}=0$ for $1 \le j \le d_1$.
\end{proof}

\section{Base-change rules for $(u,\e,s)$-sequences} \label{secbc}

Now we apply the idea of a base change to $(u,\e,s)$-sequences. Note that the results in this section are not propagation rules 
in the sense that they yield a new sequence, but they are statements on how we can view a given sequence with respect to different bases. 
We first need the following auxiliary result.

\begin{lemma} \label{lebc}
Let $b \ge 2$, $g \ge 1$, $m \ge 1$, and $s \ge 1$ be integers. Let the subinterval $J$ of $[0,1]^s$ be of the form
\begin{equation} \label{eqj}
J=\prod_{i=1}^s [a_i b^{-gf_i}, (a_i+1) b^{-gf_i})
\end{equation}
with $a_i,f_i \in \N_0$, $a_i < b^{gf_i}$, and $f_i \le m$ for $1 \le i \le s$. Then for $\x \in [0,1]^s$ we have $[\x]_{b^g,m} \in J$ if and only if
$[\x]_{b,gm} \in J$.
\end{lemma}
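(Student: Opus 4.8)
The statement is about the interplay between truncation in base $b^g$ and truncation in base $b$, applied to a box $J$ whose side-lengths are all powers of $b^g$. The key observation is that one $b^g$-digit is a block of $g$ consecutive $b$-digits, so $m$-digit truncation in base $b^g$ should agree with $gm$-digit truncation in base $b$, at least as far as membership in $J$ is concerned. Since the statement factors coordinatewise (both $J$ and the two truncation operators act coordinatewise), I would reduce immediately to the one-dimensional case $s=1$: it suffices to show that for $x \in [0,1]$ and an interval $[a b^{-gf}, (a+1)b^{-gf})$ with $f \le m$, we have $[x]_{b^g,m} \in [ab^{-gf},(a+1)b^{-gf})$ if and only if $[x]_{b,gm}\in[ab^{-gf},(a+1)b^{-gf})$.

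**Key steps.** First I would write the two truncations explicitly in terms of the base-$b$ expansion $x = \sum_{\ell \ge 1} x_\ell b^{-\ell}$ with digits $x_\ell \in Z_b$. The $gm$-digit base-$b$ truncation keeps the first $gm$ digits, $[x]_{b,gm} = \sum_{\ell=1}^{gm} x_\ell b^{-\ell}$. For the base-$b^g$ truncation, I would group the base-$b$ digits into blocks of $g$: the $r$th base-$b^g$ digit of $x$ is determined by $x_{(r-1)g+1},\ldots,x_{rg}$, so the $m$-digit base-$b^g$ truncation keeps exactly the same blocks, giving $[x]_{b^g,m} = \sum_{\ell=1}^{gm} x_\ell b^{-\ell} = [x]_{b,gm}$. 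Once this identity $[x]_{b^g,m}=[x]_{b,gm}$ is established for every $x$, the lemma is immediate in both directions, and in fact one gets the stronger conclusion that the two truncated points are literally equal, with membership in $J$ a trivial consequence.

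**The main obstacle.** The only genuine subtlety is the possible mismatch between base-$b$ and base-$b^g$ digit expansions at points that are $b$-adic rationals, i.e. the standard ambiguity $0.\overline{(b-1)}$ versus the terminating expansion, and making sure the convention for truncation (keeping leading digits) is applied consistently in both bases so that the block-regrouping identity really holds. The clean way around this is to avoid reasoning through expansions at the boundary and instead argue directly on the value: I would verify that $[x]_{b^g,m}$ and $[x]_{b,gm}$ are both equal to $b^{-gm}\lfloor b^{gm} x\rfloor$ (the unique truncation value, independent of any expansion), since $b^{gm}=(b^g)^m$; this identity of reals holds for all $x\in[0,1]$ and sidesteps the digit-ambiguity entirely. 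Granting $[x]_{b^g,m}=[x]_{b,gm}$, the left endpoint $ab^{-gf}=ab^{-g f}$ and right endpoint $(a+1)b^{-gf}$ of $J$ are automatically compatible with both truncation scales because $f\le m$ guarantees $gf\le gm$, so no rounding effect of the truncation can push the point across a boundary of $J$ that the other truncation respects. Assembling the coordinatewise statements then yields the lemma.
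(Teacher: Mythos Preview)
Your approach---reduce to $s=1$ and observe that $m$ digits in base $b^g$ are the same data as $gm$ digits in base $b$---is exactly what the paper does; the paper phrases it as a statement about which initial digits membership in $J$ prescribes (``$[x]_{b^g,m}\in J$ is equivalent to $[x]_{b,gf}\in J$, and $f\le m$ gives equivalence with $[x]_{b,gm}\in J$'') rather than asserting the stronger identity $[x]_{b^g,m}=[x]_{b,gm}$ outright, but the content is the same.

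One correction to your handling of the ``obstacle'': the formula $[x]_{b,gm}=b^{-gm}\lfloor b^{gm}x\rfloor$ is not valid in this setting. In the theory of $(t,s)$- and $(u,\e,s)$-sequences the truncation $[\cdot]_{b,m}$ is taken relative to a \emph{prescribed} digit expansion, which is permitted to end in an infinite run of $(b-1)$'s (this is precisely why the points lie in $[0,1]^s$ rather than $[0,1)^s$); for such expansions the $m$-digit truncation does not equal the floor. So the floor-function shortcut does not actually sidestep the ambiguity. The correct resolution is the one already implicit in your ``Key steps'': once the base-$b$ expansion of each coordinate is fixed (as it is for any concrete sequence), the base-$b^g$ expansion is simply obtained by regrouping those digits into blocks of length $g$, and with that convention $[x]_{b^g,m}=[x]_{b,gm}$ holds tautologically. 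The paper's proof does not discuss this point either.
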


\begin{proof}
It suffices to show the lemma for $s=1$. Thus, let $J=[a b^{-gf},(a+1) b^{-gf})$ with $a,f \in \N_0$, $a < b^{gf}$, and $f \le m$. Then $[x]_{b^g,m} \in J$
means that the first $f$ digits of $x \in [0,1]$ in base $b^g$ are prescribed. Now $f$ digits in base $b^g$ correspond to $gf$ digits in base $b$, and so
$[x]_{b^g,m} \in J$ is equivalent to $[x]_{b,gf} \in J$. Finally, $f \le m$ implies that we have $[x]_{b,gf} \in J$ if and only if $[x]_{b,gm} \in J$.
\end{proof}

\begin{theorem} \label{thbc}
Let $b \ge 2$, $s\ge 1$, and $u \ge 0$ 
be integers and let $\e =(e_1,\ldots,e_s) \in \N^s$. Then any $(u,\e,s)$-\se in base $b$ is a $(\lceil u/g \rceil,
s)$-\se in base $b^g$, where $g$ is the least common multiple of $e_1,\ldots,e_s$.
\end{theorem}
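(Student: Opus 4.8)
The plan is to reduce the assertion directly to the defining net property of the given sequence, using Lemma~\ref{lebc} as the bridge between truncations in base $b^g$ and base $b$. Write $B=b^g$ and $t=\lceil u/g\rceil$, and recall that a $(t,s)$-sequence in base $B$ is precisely a $(t,(1,\ldots,1),s)$-sequence in the sense of Definition~\ref{defues}. To verify this, I fix an integer $k\ge 0$ and an integer $M>t$, and I set $m:=gM$. Since $t\ge u/g$, the inequality $M>t$ gives $m=gM>u$; moreover $B^M=b^{gM}=b^m$, so the index range $kB^M\le n<(k+1)B^M$ coincides with $kb^m\le n<(k+1)b^m$. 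Thus, by Definition~\ref{defues} applied in base $b$, the points $[\x_n]_{b,m}$ over this index range form a $(u,m,\e,s)$-net in base $b$. The goal is to show that the points $[\x_n]_{B,M}$ over the same range form a $(t,M,s)$-net in base $B$.

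The heart of the argument is to check that every base-$B$ elementary interval relevant to the $(t,M,s)$-net property is admissible for the $(u,m,\e,s)$-net property in base $b$. Let $J=\prod_{i=1}^s[a_iB^{-D_i},(a_i+1)B^{-D_i})$ with $D_i\in\N_0$, $a_i<B^{D_i}$, and $\lambda_s(J)\ge B^{t-M}$, the latter meaning $\sum_{i=1}^s D_i\le M-t$. Writing $B^{D_i}=b^{gD_i}$ exhibits $J$ as a base-$b$ elementary interval with exponents $d_i:=gD_i$. Because $g$ is the least common multiple of $e_1,\ldots,e_s$, each $e_i$ divides $g$ and hence $e_i\mid d_i$, so the divisibility conditions of Definition~\ref{defumes} hold. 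For the volume condition I use $gt=g\lceil u/g\rceil\ge u$ to obtain $\sum_{i=1}^s d_i=g\sum_{i=1}^s D_i\le g(M-t)=m-gt\le m-u$, which is exactly $\lambda_s(J)\ge b^{u-m}$. Consequently $J$ is one of the intervals counted by the $(u,m,\e,s)$-net property, so it contains exactly $b^m\lambda_s(J)$ of the points $[\x_n]_{b,m}$.

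It remains to transfer this count from the base-$b$ truncations to the base-$B$ truncations. Since $\sum_i D_i\le M-t\le M$ forces each $D_i\le M$, the interval $J$ has the shape required by Lemma~\ref{lebc} (with $f_i=D_i$ and the lemma's $m$ equal to $M$), so $[\x_n]_{B,M}\in J$ if and only if $[\x_n]_{b,m}\in J$ for every $n$. Hence $J$ contains exactly $b^m\lambda_s(J)=B^M\lambda_s(J)$ of the points $[\x_n]_{B,M}$, which is the defining count for a $(t,M,s)$-net in base $B$; together with the fact that the index range contributes exactly $B^M$ points, this establishes the net property and therefore the theorem. The only genuinely delicate point is the pair of reductions in the middle paragraph: one must confirm that the divisibility condition is automatic from $g=\mathrm{lcm}(e_1,\ldots,e_s)$ and that the apparently stronger base-$b$ volume bound $\sum_i d_i\le m-u$ follows from the base-$B$ bound via $g\lceil u/g\rceil\ge u$. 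Everything else is bookkeeping built on Lemma~\ref{lebc}.
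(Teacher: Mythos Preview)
Your proof is correct and follows essentially the same route as the paper's: both verify the base-$b^g$ net property by rewriting the relevant elementary interval in base $b$, checking the divisibility condition via $e_i\mid g$, and invoking Lemma~\ref{lebc} to transfer the point count between truncations. The only cosmetic difference is that the paper first reduces to the statement ``any $(gu,\e,s)$-sequence in base $b$ is a $(u,s)$-sequence in base $b^g$'' and then applies it with $u$ replaced by $\lceil u/g\rceil$, whereas you absorb this reduction directly into the volume estimate via $g\lceil u/g\rceil\ge u$.
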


\begin{proof}
It suffices to show that any $(gu,\e,s)$-\se in base $b$ is a $(u,s)$-\se in base $b^g$. For then, with $v=\lceil u/g \rceil$, we have $gv \ge u$, hence a 
given $(u,\e,s)$-\se in base $b$ is also a $(gv,\e,s)$-\se in base $b$ by \cite[Remark~1 and Definition~2]{HN13}, and so it is a $(v,s)$-\se in base $b^g$.  

Thus, let $\x_0,\x_1,\ldots$ be a given $(gu,\e,s)$-\se in base $b$, where $g$ is the least common multiple of the components of $\e$. We want to prove that $\x_0,\x_1,\ldots$ is a $(u,s)$-\se in base $b^g$. For given 
integers $k \ge 0$ and $m > u$, we have to show that the points $[\x_n]_{b^g,m}$ with $kb^{gm} \le n < (k+1)b^{gm}$ form a $(u,m,s)$-net in base $b^g$. Take
an elementary interval $J$ in base $b^g$ as in~\eqref{eqj} with $\lambda_s(J)=(b^g)^{u-m}$, that is, with $\sum_{i=1}^s f_i=m-u$. Since $\x_0,\x_1,\ldots$ 
form a $(gu,\e,s)$-\se in base $b$ and since $gm > gu$, it follows that the points $[\x_n]_{b,gm}$ with $kb^{gm} \le n < (k+1)b^{gm}$ form a $(gu,gm,\e,s)$-net
in base $b$. Now $e_i | gf_i$ for $1 \le i \le s$ and $\lambda_s(J)=b^{gu-gm}$, and so the definition of a $(gu,gm,\e,s)$-net in base $b$ implies that the
number of integers $n$ with $kb^{gm} \le n < (k+1)b^{gm}$ and $[\x_n]_{b,gm} \in J$ (or equivalently $[\x_n]_{b^g,m} \in J$ by Lemma~\ref{lebc}) is given by
$b^{gm} \lambda_s(J)=b^{gu}$. This shows that the points $[\x_n]_{b^g,m}$ with $kb^{gm} \le n < (k+1)b^{gm}$ form a $(u,m,s)$-net in base $b^g$.
\end{proof}

\begin{theorem} \label{thbc2}
Let $b \ge 2$, $s\ge 1$, and $u \ge 0$ 
be integers and let $\e =(e_1,\ldots,e_s) \in \N^s$. Furthermore, denote by $h$ the greatest common divisor of $e_1,\ldots,e_s$. 
Then any $(u,\e,s)$-\se in base $b$ is a $(\lceil u/h \rceil, \e/h,s)$-\se in base $b^h$, where we write $\e /h$ for $(e_1/h, e_2/h,\ldots, e_s/h)$.
\end{theorem}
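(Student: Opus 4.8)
The plan is to follow the proof of Theorem~\ref{thbc} almost verbatim, with the least common multiple $g$ replaced by the greatest common divisor $h$ and the classical target sequence replaced by a $(\,\cdot\,,\e/h,s)$-sequence; note that $h\mid e_i$ for all $i$, so $\e/h\in\N^s$ is well defined. I would first reduce to the clean statement that, for every integer $u'\ge 0$, any $(hu',\e,s)$-sequence in base $b$ is a $(u',\e/h,s)$-sequence in base $b^h$. Granting this, set $v=\lceil u/h\rceil$ so that $hv\ge u$; then the given $(u,\e,s)$-sequence in base $b$ is also a $(hv,\e,s)$-sequence in base $b$ by \cite[Remark~1 and Definition~2]{HN13}, and applying the clean statement with $u'=v$ shows it is a $(v,\e/h,s)$-sequence in base $b^h$, which is the assertion.

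To prove the clean statement, let $\x_0,\x_1,\ldots$ be a $(hu',\e,s)$-sequence in base $b$ and write $\e'=\e/h$. Fixing integers $k\ge 0$ and $m>u'$, I must show that the points $[\x_n]_{b^h,m}$ with $kb^{hm}\le n<(k+1)b^{hm}$ form a $(u',m,\e',s)$-net in base $b^h$. Here the crucial point, and the place where the argument genuinely differs from Theorem~\ref{thbc}, is that the target net parameter $\e'$ is not the all-ones vector, so I cannot restrict to intervals of exact threshold volume: I must verify the exact-count condition for \emph{every} elementary interval $J=\prod_{i=1}^s[a_i(b^h)^{-d_i},(a_i+1)(b^h)^{-d_i})$ in base $b^h$ with $(e_i/h)\mid d_i$ and $\sum_{i=1}^s d_i\le m-u'$. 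Since $(b^h)^{-d_i}=b^{-hd_i}$, each such $J$ is also an elementary interval in base $b$ with side-exponents $hd_i$, and I would record the two compatibility facts $e_i\mid hd_i$ (immediate from $(e_i/h)\mid d_i$) and $d_i\le m$ (from $\sum_i d_i\le m-u'$).

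The counting is then driven by the base-$b$ sequence. Because $hm>hu'$, the points $[\x_n]_{b,hm}$ over the block $kb^{hm}\le n<(k+1)b^{hm}$ form a $(hu',hm,\e,s)$-net in base $b$, and the volume of $J$ satisfies $\lambda_s(J)=b^{-h\sum_i d_i}\ge b^{hu'-hm}$, so this net---used in the narrower sense, which is exactly what makes all admissible volumes work---contains precisely $b^{hm}\lambda_s(J)$ of these points in $J$. Finally I would invoke Lemma~\ref{lebc} with $g=h$ and $f_i=d_i\le m$ to obtain $[\x_n]_{b^h,m}\in J\iff[\x_n]_{b,hm}\in J$; hence the number of $n$ in the block with $[\x_n]_{b^h,m}\in J$ equals $b^{hm}\lambda_s(J)=(b^h)^m\lambda_s(J)$, which is the defining requirement of a $(u',m,\e',s)$-net in base $b^h$. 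I do not expect a serious obstacle: every individual step is routine, and the only thing demanding care is the bookkeeping that simultaneously matches the common block length $b^{hm}$, converts the divisibility from $e_i/h$ in base $b^h$ to $e_i$ in base $b$, and aligns the two volume thresholds so that the narrower-definition exact count transfers for all admissible intervals.
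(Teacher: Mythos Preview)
Your proposal is correct and follows essentially the same approach as the paper's proof: reduce via the ceiling argument to showing that any $(hu',\e,s)$-sequence in base $b$ is a $(u',\e/h,s)$-sequence in base $b^h$, then imitate the proof of Theorem~\ref{thbc} with $g$ replaced by $h$, checking the net condition for all admissible elementary intervals (not just those of threshold volume) and converting the divisibility $(e_i/h)\mid d_i$ in base $b^h$ into $e_i\mid hd_i$ in base $b$. Your write-up is in fact more explicit than the paper's, which simply says the argument is analogous to Theorem~\ref{thbc} and points out the same single modification you identified.
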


\begin{proof}
Similarly to the proof of Theorem~\ref{thbc}, it is sufficient to show that any $(hu,\e,s)$-\se in base $b$ is a $(u,\e/h,s)$-\se in base $b^h$, with $h$ 
being the greatest common divisor of the components of $\e$. 

Let therefore $\e$ be given and let $h$ be defined as above. Let $\x_0,\x_1,\ldots$ be a given $(hu,\e,s)$-\se in base $b$. 
We want to prove that $\x_0,\x_1,\ldots$ is a $(u,\e /h,s)$-\se in base $b^h$. The argument works in an analogous way to the proof of Theorem~\ref{thbc}, the only 
difference being that, for $m>u$, we consider an elementary interval $J$ in base $b^h$ with $\lambda_s (J)\ge (b^h)^{u-m}$ (that is, with $\sum_{i=1}^s f_i \le m-u$), 
and with $(e_i/h)|f_i$ for  $1\le i\le s$. We can then follow the argument in the proof of Theorem~\ref{thbc}, and we then have $\lambda_s (J)\ge b^{hu-hm}$ and 
$e_i | hf_i$ for $1\le i\le s$, which yields, in exactly the same fashion as above, the desired result. 
\end{proof}

\section{Duality theory} \label{secduality}

We generalize the classical duality theory for digital nets introduced in~\cite{NP01} by developing a duality theory for digital $(u,m,\e,s)$-nets over the finite
field $\F_q$, where $q$ is an arbitrary prime power. Throughout this section, the prime power $q$ and the positive integer $m$ are fixed.

For $\bsa=(a_1,\ldots,a_m)\in\Field_q^m$ and $e\in\NN$, we introduce the weight $v_e (\bsa)$ by
$$ v_e(\bsa)=\begin{cases}
              0 &\mbox{if $\bsa=\bszero$,}\\
              \min\left\{m,e\left\lceil\max\{j: a_j\neq 0\}/e\right\rceil\right\} & \mbox{otherwise.}
             \end{cases}$$
This definition can be extended to $\Field_q^{sm}$ by considering a vector $\bsA\in\Field_q^{sm}$ as the concatenation of 
$s$ vectors of length $m$ each, i.e.,
$$\bsA=(\bsa^{(1)},\ldots,\bsa^{(s)})\in\Field_q^{sm}$$
with $\bsa^{(i)}\in\Field_q^m$ for $1\le i\le s$, 
and by putting, for $\e=(e_1,\ldots,e_s)\in\NN^s$, 
$$ V_{m,\e}(\bsA):=\sum_{i=1}^s v_{e_i} (\bsa^{(i)}).$$

\begin{remark} \label{revm} {\rm
For $\e= {\bf 1}=(1,\ldots,1)$, $V_{m,\e}$ coincides with $V_m$ defined in~\cite{NP01}.}
\end{remark}

\begin{definition} \label{defmd} {\rm
Let $q$ be a prime power, let $s,m\ge 1$ be integers, and let $\cN$ be a linear subspace of $\Field_q^{sm}$. 
Then we define the \emph{minimum distance} $\delta_{m,\e}(\cN)$ of $\cN$ (for $\e\in\NN^s$) as
$$\delta_{m,\e}(\cN):=\begin{cases}
                       \min_{\bsA \in\cN\setminus \{\bszero\}} V_{m,\e} (\bsA) & \mbox{if $\cN\neq\{\bszero\}$},\\
                       sm+1 & \mbox{otherwise.}
                      \end{cases}
$$
}
\end{definition}

It is trivial that we have $\delta_{m,\e} (\cN)\ge 1$ for every linear subspace $\cN$ of $\Field_q^{sm}$. We now show the following result, which is a
generalization of \cite[Proposition~1]{NP01}. 

\begin{proposition}
Let $q$ be a prime power and $s,m\ge 1$ be integers. For any linear subspace $\cN$ of $\Field_q^{sm}$ and for any $\e\in\NN^s$, we have 
$$\delta_{m,\e} (\cN)\le sm -\dim (\cN) + \min_{1\le i\le s} e_i. $$
\end{proposition}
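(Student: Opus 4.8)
The plan is to establish the bound by exhibiting a nonzero vector in $\cN$ whose weight $V_{m,\e}$ is at most $sm-\dim(\cN)+\min_{1\le i\le s}e_i$, together with the trivial case $\cN=\{\bszero\}$. Write $t:=\min_{1\le i\le s}e_i$ and $k:=\dim(\cN)$; by symmetry of the argument I may assume the minimum $e_i$ is attained at $i=1$, so $e_1=t$. If $\cN=\{\bszero\}$ then $k=0$ and the right-hand side is $sm+t\ge sm+1=\delta_{m,\e}(\cN)$, so the inequality holds. Hence assume $k\ge 1$.

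The key step is a counting/dimension argument producing a nonzero element of $\cN$ that is supported on only a few coordinates. Concretely, I would consider the coordinate projection that forgets the last several coordinates and force a nontrivial kernel. Let $N:=sm-k+t$; I claim there is a nonzero $\bsA=(\bsa^{(1)},\ldots,\bsa^{(s)})\in\cN$ for which $V_{m,\e}(\bsA)\le N$. To arrange this, note that $V_{m,\e}$ is determined blockwise: $v_{e_i}(\bsa^{(i)})$ depends only on the position of the last nonzero entry of $\bsa^{(i)}$, rounded up to a multiple of $e_i$ and capped at $m$. So I want a nonzero element of $\cN$ whose trailing (high-index) coordinates vanish in each block, in a pattern that keeps the block-weights small. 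The natural device is to choose an index set $T\subseteq\{1,\ldots,s\}\times\{1,\ldots,m\}$ of coordinates of size $k-1$ consisting of the top (largest-index) coordinates within blocks, chosen so that forcing those $k-1$ coordinates to zero still leaves a nonzero solution inside $\cN$ (possible because the linear map $\cN\to\Field_q^{k-1}$ restricting to those coordinates cannot be injective on a $k$-dimensional space). The resulting nonzero vector then has its last nonzero entries pushed down, and a careful bookkeeping of how many top coordinates are zeroed in each block—using that one block may be charged only up to $e_1=t$ rather than down to $m$—yields $\sum_{i=1}^s v_{e_i}(\bsa^{(i)})\le sm-(k-1)+(t-1)=sm-k+t$.

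More precisely, I would order the coordinates within each block from position $m$ downward and zero out a total of $k-1$ of the highest available positions, distributed so that whenever I remove one high coordinate from block $i$ I decrease the \emph{potential} contribution of that block. The cap at $m$ means each block contributes at most $m$, for a baseline total of $sm$; each zeroed top coordinate in a block whose last nonzero entry sits at a multiple of $e_i$ lowers that block's contribution, and the rounding-up in the definition of $v_{e_i}$ is exactly compensated once the count of removed coordinates is chosen as $k-1$. The slack of $+t$ in the bound is precisely the cost of the one block ($i=1$, with the smallest $e_i=t$) where the rounding cannot be improved past the first multiple of $e_1$. This matches the classical statement of \cite[Proposition~1]{NP01} when $\e=\bsone$, where $t=1$ and the bound reads $\delta_m(\cN)\le sm-\dim(\cN)+1$.

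The main obstacle I anticipate is the bookkeeping in the weight count: because $v_{e_i}$ rounds the top nonzero position \emph{up} to the next multiple of $e_i$, zeroing a single high coordinate need not reduce $v_{e_i}$ by $1$—it reduces it only when crossing a multiple of $e_i$. The careful part is therefore to choose \emph{which} $k-1$ top coordinates to annihilate so that the total weight drop is at least $k-1-(t-1)=k-t$, i.e. to show the rounding losses are absorbed by the single $+t$ term. I expect this to follow by treating the smallest-$e_i$ block (here block $1$) as the place where one tolerates a full $e_1=t$ rounding block, and arguing that in every other block the removed coordinates can be chosen at multiples of $e_i$ so no rounding loss occurs there; a clean way to formalize this is to first pass to the residues of the top-positions modulo the respective $e_i$ and invoke the nonsingularity of the restricted projection map on $\cN$.
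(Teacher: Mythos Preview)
Your overarching strategy---force a nonzero element of $\cN$ to vanish on $k-1$ prescribed coordinates via a dimension argument, then bound $V_{m,\e}$ of that element---is exactly the paper's approach. The trivial case is also handled correctly. However, your execution has a genuine gap in the bookkeeping you yourself flag as the ``main obstacle'', and your proposed resolution does not work as stated.

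The problem is with the sentence ``in every other block the removed coordinates can be chosen at multiples of $e_i$ so no rounding loss occurs there''. Removing the top $c_i$ coordinates from block $i$ leaves the last nonzero entry at position at most $m-c_i$, whence $v_{e_i}(\bsa^{(i)})\le\min\bigl(m,\,e_i\lceil(m-c_i)/e_i\rceil\bigr)$. For this to equal $m-c_i$ you need $e_i\mid(m-c_i)$, not $e_i\mid c_i$; since there is no hypothesis that $e_i\mid m$, choosing $c_i$ a multiple of $e_i$ does \emph{not} eliminate the rounding loss in block $i$. So as written your weight estimate $\sum_i v_{e_i}(\bsa^{(i)})\le sm-k+t$ is not justified, and the final remark about ``residues of the top-positions modulo the respective $e_i$'' is too vague to repair it.

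The clean way to close the gap---and this is precisely what the paper does---is not to spread the $k-1$ zeroed coordinates cleverly across blocks, but simply to kill the last $k-1$ \emph{consecutive} coordinates of $\F_q^{sm}$. Writing $sm-k+1=Km+r$ with $0\le K\le s-1$ and $1\le r\le m$, a nonzero element in the kernel of the projection to those last $k-1$ coordinates (or, if the projection to the last $k$ coordinates is surjective, a preimage of $(1,0,\ldots,0)$) has blocks $K+2,\ldots,s$ identically zero, blocks $1,\ldots,K$ bounded trivially by $m$, and block $K+1$ supported in positions $1,\ldots,r$, so $v_{e_{K+1}}(\bsa^{(K+1)})\le e_{K+1}\lceil r/e_{K+1}\rceil\le r+e_{K+1}-1$. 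The crucial point is that the ``partial'' block is block $K+1$, not block $1$: so the correct permutation is to place $\min_i e_i$ at index $K+1$, not at index $1$ as you do. With that choice the total is at most $Km+r+e_{K+1}-1=sm-k+\min_i e_i$, and the proof is complete.
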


\begin{proof}
If $\cN=\{\bszero\}$, the result is trivial.
If $\cN\neq \{\bszero\}$, let $h:=\dim(\cN)\ge 1$. We write $sm-h+1=km+r$ with integers $0 \le k \le s-1$ and $1 \le r \le m$. Without loss of generality, we can
use a permutation of the $e_i$ and the same permutation for the components $\bsa^{(i)}$ of each $\bsA \in \F_q^{sm}$ such that $\min_{1 \le i \le s} e_i=e_{k+1}$.
Let $\pi:\cN\To\Field_q^h$ be the linear transformation which maps
$\bsA\in\cN$ to the $h$-tuple of the last $h$ coordinates of $\bsA$. If $\pi$ is surjective, then there exists a nonzero 
$\bsA_1\in\cN$ with 
$$\pi (\bsA_1)=(1,0,\ldots,0)\in\Field_q^h.$$
Let $\bsA_1=(\bsb^{(1)},\ldots,\bsb^{(s)})$ with $\bsb^{(i)} \in \F_q^m$ for $1 \le i \le s$. For $1 \le i \le k$ we have the trivial bound $v_{e_i}(\bsb^{(i)})
\le m$. For $i=k+1$ we have
$$
v_{e_{k+1}}(\bsb^{(k+1)}) \le e_{k+1} \lceil r/e_{k+1} \rceil \le r+e_{k+1}-1.
$$
For $k+2 \le i \le s$ we have $v_{e_i}(\bsb^{(i)}) =0$. Therefore
$$
V_{m,\e}(\bsA_1) \le km+r+e_{k+1}-1=sm-h+ \min_{1 \le i \le s} e_i.
$$ 

If $\pi$ is not surjective, then for any nonzero $\bsA_2$ in the kernel of $\pi$ we obtain by a similar argument,
$$V_{m,\e} (\bsA_2)\le sm-h-1 +\min_{1\le i\le s} e_i,$$
and so in both cases the result follows.
\end{proof}

We introduce some additional notation. With a given system of vectors 
$$\left\{\bsc_j^{(i)}\in\Field_q^m: 1\le j\le m, 1\le i\le s\right\},$$
we associate the matrices $C_i$, $1\le i\le s$, as the $m\times m$ matrices with column vectors 
$\bsc_1^{(i)},\ldots,\bsc_m^{(i)}$. Combine these matrices into the $m \times sm$ matrix
$$C=(C_1 | C_2 | \cdots | C_s),$$
so that $C_1,\ldots,C_s$ are submatrices of $C$. By $\cC \subseteq \F_q^{sm}$ we denote the row space of $C$ and by 
$\cC^\perp \subseteq \F_q^{sm}$ the dual space of $\cC$.
We now show the following theorem.

\begin{theorem} \label{thmsystemdual}
Let $q$ be a prime power, let $s,m\ge 1$ be integers, and $\e\in\NN^s$. Furthermore, let the system
$$\left\{\bsc_j^{(i)}\in\Field_q^m: 1\le j\le m, 1\le i\le s\right\}$$
be given. Then with the notation above, the following results hold for an integer $d\in\{0,\ldots,m\}$.
\begin{itemize}
 \item[(a)] If the system is 
 a $(d,m,\e,s)$-system over $\Field_q$, then  $\delta_{m,\e} (\cC^\perp)\ge d$.
 \item[(b)] If the matrices $C_1,\ldots,C_s$ are all nonsingular and if the system is 
 a $(d,m,\e,s)$-system over $\Field_q$, then  $\delta_{m,\e} (\cC^\perp)\ge d+1$.
  \item[(c)] If $\delta_{m,\e} (\cC^\perp)\ge d+1$, then it follows that the system is a 
 $(d,m,\e,s)$-system over $\Field_q$. 
\end{itemize}
\end{theorem}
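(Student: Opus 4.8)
The plan is to translate the combinatorial linear-independence condition defining a $(d,m,\e,s)$-system into a statement about the minimum weight of a nonzero dual codeword, working through all three parts via the same underlying relationship. The central observation linking the two worlds is this: for a vector $\bsA=(\bsa^{(1)},\ldots,\bsa^{(s)})\in\cC^\perp$, the orthogonality condition $\bsA\cdot\bsr=0$ for every row $\bsr$ of $C$ means precisely that $\sum_{i=1}^s C_i^\top\bsa^{(i)}=\bszero$, i.e.\ the coordinates of the $\bsa^{(i)}$ furnish a linear dependence among the column vectors $\bsc_j^{(i)}$ of the matrices $C_i$. The weight $v_{e_i}(\bsa^{(i)})$ is engineered so that if $a^{(i)}_j\ne 0$ for some $j$, then $v_{e_i}(\bsa^{(i)})\ge e_i\lceil j/e_i\rceil\ge j$, meaning that $v_{e_i}(\bsa^{(i)})$ is at least the index of the last nonzero coordinate of $\bsa^{(i)}$, rounded up to a multiple of $e_i$. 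This is exactly the quantity that controls how many of the first columns $\bsc_1^{(i)},\ldots,\bsc_{d_i}^{(i)}$ a dependence can involve.

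For part (c), I would argue contrapositively. Suppose the system is not a $(d,m,\e,s)$-system; then there exist nonnegative integers $d_1,\ldots,d_s$ with $e_i\mid d_i$ and $\sum_i d_i\le d$ for which the vectors $\bsc_j^{(i)}$ with $1\le j\le d_i$ are linearly dependent. Collecting the coefficients of such a dependence into vectors $\bsa^{(i)}\in\Field_q^m$ supported on coordinates $1,\ldots,d_i$ produces a nonzero $\bsA\in\cC^\perp$ with $v_{e_i}(\bsa^{(i)})\le e_i\lceil d_i/e_i\rceil=d_i$ (using $e_i\mid d_i$), so $V_{m,\e}(\bsA)\le\sum_i d_i\le d$, forcing $\delta_{m,\e}(\cC^\perp)\le d<d+1$. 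For part (a), I run the reverse implication: take any nonzero $\bsA\in\cC^\perp$, set $d_i:=v_{e_i}(\bsa^{(i)})$, which is automatically a multiple of $e_i$ (or $m$ when truncated, but the last nonzero coordinate then lies within the first $d_i$ positions), so $\bsa^{(i)}$ is supported within the first $d_i$ coordinates and the orthogonality relation exhibits a nontrivial dependence among the vectors $\bsc_j^{(i)}$, $1\le j\le d_i$. If we had $V_{m,\e}(\bsA)=\sum_i d_i\le d-1<d$, the system property (which demands independence whenever $\sum_i d_i\le d$) would be contradicted, hence $V_{m,\e}(\bsA)\ge d$ for every nonzero $\bsA$, giving $\delta_{m,\e}(\cC^\perp)\ge d$.

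For part (b), the extra nonsingularity hypothesis is what upgrades the bound from $d$ to $d+1$. The point is that when the $C_i$ are nonsingular, one can sharpen the bookkeeping: if $\sum_i d_i=d$ and $\bsa^{(i)}$ is supported on the first $d_i$ coordinates with $d_i=v_{e_i}(\bsa^{(i)})$, then at least one $\bsa^{(i)}$ has its last nonzero coordinate at a position strictly beyond what a smaller multiple of $e_i$ would allow, and nonsingularity rules out any dependence confined to a proper initial segment, so the minimal such $\bsA$ must actually satisfy $\sum_i d_i\ge d+1$. Concretely I would show that a nonzero $\bsA\in\cC^\perp$ with $V_{m,\e}(\bsA)=d$ cannot exist, because it would produce a dependence among at most $d$ of the column vectors that, by the $(d,m,\e,s)$-system property, is impossible unless some $\bsa^{(i)}$ vanishes on its support — and nonsingularity forbids the degenerate case where all the mass could be absorbed.

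The main obstacle I anticipate is the careful handling of the truncation clause $\min\{m,\ldots\}$ in the definition of $v_e$ together with the rounding $e\lceil\cdot/e\rceil$. One must verify in both directions that setting $d_i:=v_{e_i}(\bsa^{(i)})$ genuinely yields $e_i\mid d_i$ and that $\bsa^{(i)}$ is supported within its first $d_i$ coordinates, and conversely that a dependence on the first $d_i$ columns with $e_i\mid d_i$ gives weight at most $d_i$; the inequalities $e_i\lceil j/e_i\rceil\ge j$ and $e_i\lceil d_i/e_i\rceil=d_i$ must be tracked without sign errors. The nonsingularity argument in part (b) requires the most care, since it is precisely the borderline case $\sum_i d_i=d$ that distinguishes (a) from (b), and I would expect to spend the bulk of the write-up making that edge case airtight.
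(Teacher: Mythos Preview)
Your approach to parts (a) and (c) is correct and essentially the same as the paper's: the key observation that $\bsA\in\cC^\perp$ is equivalent to $\sum_i\sum_j a_j^{(i)}\bsc_j^{(i)}=\bszero$, and the contrapositive argument for (c), are exactly what the paper does. One subtlety in (a) that you brush past: when $d_i:=v_{e_i}(\bsa^{(i)})=m$, this value need not be divisible by $e_i$, so the system property cannot be invoked directly. Your argument survives because if $\sum_i d_i\le d-1\le m-1$ then no $d_i$ can equal $m$; the paper instead makes an explicit case split on whether some $v_{e_i}(\bsa^{(i)})=m$.

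In part (b), however, you have not correctly identified how nonsingularity enters. Your claim that ``nonsingularity rules out any dependence confined to a proper initial segment'' is off target --- nonsingularity says nothing special about proper initial segments of columns. Here is the actual mechanism. Suppose $V_{m,\e}(\bsA)\le d\le m$ and set $d_i=v_{e_i}(\bsa^{(i)})$. If every $d_i<m$, then each $d_i$ is a multiple of $e_i$ and $\sum_i d_i\le d$, so the $(d,m,\e,s)$-system property already yields a contradiction; nonsingularity is not used here. The only remaining case is that some $d_{i_0}=m$. Since $\sum_i d_i\le d\le m$, this forces $d_j=0$ for all $j\ne i_0$, and the dependence collapses to $\sum_{j=1}^m a_j^{(i_0)}\bsc_j^{(i_0)}=\bszero$ with $\bsa^{(i_0)}\ne\bszero$, contradicting the nonsingularity of $C_{i_0}$. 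So nonsingularity is used precisely to eliminate the single degenerate configuration where all the weight sits in one block of length $m$; the ``edge case'' you should be making airtight is the truncation case $d_i=m$, not any subtlety with the rounding.
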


\begin{proof}
The result is trivial if $\cC=\Field_q^{sm}$, so we can assume that $\cC\subsetneq\Field_q^{sm}$ in the following. Note that then $\dim(\cC^{\perp}) \ge 1$. 
For $\bsA=(\bsa^{(1)},\ldots,\bsa^{(s)})\in\Field_q^{sm}$ with $\bsa^{(i)}=(a_1^{(i)},\ldots,a_m^{(i)})\in\Field_q^m$ 
for $1\le i\le s$, we have
$$\sum_{i=1}^s \sum_{j=1}^m a_j^{(i)}\bsc_j^{(i)}=\bszero\in\Field_q^m$$
if and only if $C \bsA^{\top}=\bszero \in \F_q^m$, i.e., if and only if $\bsA \in \cC^\perp$.

Now let the given system be a $(d,m,\e,s)$-system over $\Field_q$ and consider any nonzero $\bsA\in\cC^\perp$. Then from 
the previous observation we obtain
\begin{equation}\label{eqlindependent}
 \sum_{i=1}^s \sum_{j=1}^m a_j^{(i)} \bsc_j^{(i)}=\bszero\in\Field_q^m.
\end{equation}

Suppose first that there exists an index $i_0\in\{1,\ldots,s\}$ with $v_{e_{i_0}}(\bsa^{(i_0)})=m$. In this case, we distinguish two subcases.
\begin{itemize}
 \item If for some $i_1\in\{1,\ldots,s\}\setminus\{i_0\}$, we have $v_{e_{i_1}}(\bsa^{(i_1)})>0$, then
 $$ V_{m,\e}(\bsA)\ge  v_{e_{i_0}}(\bsa^{(i_0)})+v_{e_{i_1}}(\bsa^{(i_1)})\ge m+1\ge d+1,$$
as $d$ cannot exceed $m$ by definition. In this case, the results in~(a) and~(b) are shown.

 \item Suppose now that there is no index $i_1\in\{1,\ldots,s\}\setminus\{i_0\}$ for which $v_{e_{i_1}}(\bsa^{(i_1)})>0$. In this case, the result 
 in~(a) follows immediately as $V_{m,\e}(\bsA)= v_{e_{i_0}}(\bsa^{(i_0)})=m\ge d$. To show the result in~(b), note that equation~\eqref{eqlindependent} is equivalent to 
 $$\sum_{j=1}^m a_j^{(i_0)}\bsc_j^{(i_0)}=\bszero\in\Field_q^m,$$
where not all $a_j^{(i_0)}$ are zero. However, this is a contradiction to the assumption that $C_{i_0}$ is nonsingular, 
so this situation cannot occur under the hypotheses in~(b).
\end{itemize}
In summary, we have shown the results in~(a) and~(b) for the case where there exists an index $i_0\in\{1,\ldots,s\}$ with $v_{e_{i_0}}(\bsa^{(i_0)})=m$.

Suppose now that $v_{e_i}(\bsa^{(i)})< m$ for all $i\in\{1,\ldots,s\}$. Note that this implies that $e_i| v_{e_i}(\bsa^{(i)})$ for all $i\in\{1,\ldots,s\}$. 
In this case,~\eqref{eqlindependent} boils down to
$$\sum_{i=1}^s \sum_{j=1}^{v_{e_i}(\bsa^{(i)})} a_j^{(i)} \bsc_j^{(i)}=\bszero\in\Field_q^m.$$
As $e_i | v_{e_i}(\bsa^{(i)})$ for all $i\in\{1,\ldots,s\}$, and in view of Definition~\ref{defsystem}, this linear dependence relation can hold only if 
$$V_{m,\e}(\bsA)=\sum_{i=1}^s v_{e_i} (\bsa^{(i)})\ge d+1,$$
and so we obtain the desired results in~(a) and~(b). 

To show the assertion in~(c), assume now that $\delta_{m,\e}(\cC^\perp)\ge d+1$. We have to show that any system
$\{\bsc_j^{(i)}\in\Field_q^m: 1\le j\le d_i, 1\le i\le s\}$ with $0\le d_i\le m$, $e_i|d_i$, $1\le i\le s$, and $\sum_{i=1}^s d_i \le d$ is 
linearly independent over $\Field_q$. Suppose, on the contrary, that such a system were linearly dependent over $\Field_q$, i.e., 
that there exist coefficients $a_j^{(i)}\in\Field_q$, not all 0, such that 
$$\sum_{i=1}^s \sum_{j=1}^{d_i} a_j^{(i)} \bsc_j^{(i)}=\bszero\in\Field_q^m.$$
Define $a_j^{(i)}=0$ for $d_i < j\le m$, $1\le i\le s$, then 
$$\sum_{i=1}^s \sum_{j=1}^m a_j^{(i)} \bsc_j^{(i)} =\bszero\in\Field_q^m.$$
This implies that $\bsA\in\cC^\perp$, and so $V_{m,\e}(\bsA)\ge d+1$. On the other hand, 
$$v_{e_i}(\bsa^{(i)})\le e_i \left\lceil \frac{d_i}{e_i}\right\rceil=d_i\ \mbox{for}\  1\le i\le s,$$
and so 
$$V_{m,\e} (\bsA)=\sum_{i=1}^s v_{e_i}(\bsa^{(i)})\le \sum_{i=1}^s d_i \le d,$$
which is a contradiction. Hence the proof of~(c) is complete.
\end{proof}

We now have the following consequence.

\begin{theorem} \label{thmnetdual}
Let $q$ be a prime power, let $s \ge 1$, $m \ge 1$, and $0\le u\le m$ be integers, and let $\e\in\NN^s$. Let $C_1,\ldots,C_s$ be $m\times m$ matrices 
over $\Field_q$ and  put
$$C:=(C_1^\top | C_2^\top |\cdots | C_s^\top) \in \F_q^{m \times sm}.$$
Let $\cC^{\perp}$ be the dual space of the row space $\cC$ of $C$.
\begin{itemize}
 \item[(a)] If $C_1,\ldots,C_s$ are all nonsingular, then they generate a digital $(u,m,\e,s)$-net over $\Field_q$ if and only if 
 $\delta_{m,\e} (\cC^\perp)\ge m-u+1$.
 \item[(b)] If $C_1,\ldots,C_s$ are arbitrary, then the following assertions hold:
 \begin{itemize}
 \item If $\delta_{m,\e}(\cC^\perp)\ge m-u+1$, this implies that $C_1,\ldots,C_s$ generate a digital $(u,m,\e,s)$-net over $\Field_q$.
 \item If $C_1,\ldots,C_s$ generate a digital $(u,m,\e,s)$-net over $\Field_q$, it follows that $\delta_{m,\e}(\cC^\perp)\ge m-u$.
 \end{itemize}
\end{itemize}
\end{theorem}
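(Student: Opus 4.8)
The plan is to deduce Theorem~\ref{thmnetdual} directly from Theorem~\ref{thmsystemdual} by translating the matrix setup into the language of $(d,m,\e,s)$-systems and invoking Lemma~\ref{lemnetsystem}. The first step is bookkeeping about transposes: here $C=(C_1^\top|\cdots|C_s^\top)$, so the $j$th column of the block $C_i^\top$ is the $j$th \emph{row} of $C_i$, which by the convention preceding Lemma~\ref{lemnetsystem} is the vector $\bsc_j^{(i)}$. Thus the system of row vectors $\{\bsc_j^{(i)}\}$ of $C_1,\ldots,C_s$ is precisely the column system associated with $C$ in the notation introduced before Theorem~\ref{thmsystemdual}, and the row space $\cC$ and its dual $\cC^\perp$ match those in that theorem. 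This identification is the only genuinely fiddly point, and I expect it to be the main (minor) obstacle: one must check that the transpose placement in the present statement lines up with the untransposed convention used in the duality section.

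Once this is set up, I would combine the two halves of Lemma~\ref{lemnetsystem} with Theorem~\ref{thmsystemdual} applied with $d=m-u$. Concretely, by Lemma~\ref{lemnetsystem}, the matrices $C_1,\ldots,C_s$ generate a digital $(u,m,\e,s)$-net over $\F_q$ if and only if the system $\{\bsc_j^{(i)}\}$ is an $(m-u,m,\e,s)$-system over $\F_q$. So the statement reduces to relating the $(m-u,m,\e,s)$-system property to the bound on $\delta_{m,\e}(\cC^\perp)$, which is exactly what Theorem~\ref{thmsystemdual} provides with $d=m-u$.

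For part~(a), assume $C_1,\ldots,C_s$ are all nonsingular. If they generate a digital $(u,m,\e,s)$-net, then the system is an $(m-u,m,\e,s)$-system, and Theorem~\ref{thmsystemdual}(b) yields $\delta_{m,\e}(\cC^\perp)\ge (m-u)+1=m-u+1$. Conversely, if $\delta_{m,\e}(\cC^\perp)\ge m-u+1=(m-u)+1$, then Theorem~\ref{thmsystemdual}(c) with $d=m-u$ shows the system is an $(m-u,m,\e,s)$-system, hence by Lemma~\ref{lemnetsystem} the matrices generate a digital $(u,m,\e,s)$-net. This gives the ``if and only if'' under the nonsingularity hypothesis.

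For part~(b), with arbitrary $C_1,\ldots,C_s$, the two implications follow from the unconditional parts of Theorem~\ref{thmsystemdual}. If $\delta_{m,\e}(\cC^\perp)\ge m-u+1$, then again Theorem~\ref{thmsystemdual}(c) with $d=m-u$ shows the system is an $(m-u,m,\e,s)$-system, so by Lemma~\ref{lemnetsystem} the matrices generate a digital $(u,m,\e,s)$-net. Conversely, if they generate such a net, the system is an $(m-u,m,\e,s)$-system, and now only the weaker Theorem~\ref{thmsystemdual}(a) is available (since nonsingularity is not assumed), giving $\delta_{m,\e}(\cC^\perp)\ge m-u$. The asymmetry between parts~(a) and~(b) is thus precisely the gap between Theorem~\ref{thmsystemdual}(a) and~(b), namely whether the nonsingularity of the $C_i$ lets us upgrade the bound from $d$ to $d+1$; no further computation is needed beyond the transpose identification noted above.
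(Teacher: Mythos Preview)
Your proposal is correct and is exactly the paper's approach: the paper's proof is the single line ``The result follows by combining Lemma~\ref{lemnetsystem} and Theorem~\ref{thmsystemdual},'' and you have simply spelled out this combination (including the transpose bookkeeping) in detail.
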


\begin{proof}
 The result follows by combining Lemma~\ref{lemnetsystem} and Theorem~\ref{thmsystemdual}.
\end{proof}

For the special case where $\e={\bf 1}=(1,\ldots,1)\in\NN^s$, there exists a stronger version of Theorem~\ref{thmnetdual}, which is the following Theorem~2 
in~\cite{NP01}.

\begin{theorem} \label{thmnetdualNP}
Let $q$ be a prime power, let $s \ge 1$, $m \ge 1$, and $0\le t\le m$ be integers. Let $C_1,\ldots,C_s$ be $m\times m$ matrices over $\Field_q$ and put
 $$C:=(C_1^\top | C_2^\top | \cdots | C_s^\top) \in \F_q^{m \times sm}.$$
Let $\cC^{\perp}$ be the dual space of the row space $\cC$ of $C$.
Then $C_1,\ldots,C_s$ generate a digital $(t,m,s)$-net over $\Field_q$ if and only if $\delta_m (\cC^\perp)\ge m-t+1$, where 
 $\delta_m=\delta_{m,\bsone}$, as introduced in~\cite{NP01}.
\end{theorem}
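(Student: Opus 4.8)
The plan is to deduce this from the system-based duality of Theorem~\ref{thmsystemdual} together with Lemma~\ref{lemnetsystem}, checking that the special structure $\e=\bsone$ lets us dispense with the nonsingularity hypothesis that was needed in Theorem~\ref{thmnetdual}. First I would apply Lemma~\ref{lemnetsystem} with $\e=\bsone$ to rephrase the net property: the matrices $C_1,\ldots,C_s$ generate a digital $(t,m,s)$-net over $\F_q$ if and only if the system of row vectors $\{\bsc_j^{(i)}\}$ is an $(m-t,m,\bsone,s)$-system over $\F_q$. Setting $d:=m-t\in\{0,\ldots,m\}$, the task reduces to showing that this system is a $(d,m,\bsone,s)$-system if and only if $\delta_{m,\bsone}(\cC^\perp)\ge d+1$.

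One implication is already available: Theorem~\ref{thmsystemdual}(c), applied with this $d$, shows that $\delta_{m,\bsone}(\cC^\perp)\ge d+1$ forces the system to be a $(d,m,\bsone,s)$-system, which via the reformulation yields the net. For the converse I would argue directly. Assume the system is an $(m-t,m,\bsone,s)$-system and take an arbitrary nonzero $\bsA=(\bsa^{(1)},\ldots,\bsa^{(s)})\in\cC^\perp$, writing $w_i:=v_1(\bsa^{(i)})$ so that $V_{m,\bsone}(\bsA)=\sum_{i=1}^s w_i$. As in the opening of the proof of Theorem~\ref{thmsystemdual}, the membership $\bsA\in\cC^\perp$ is equivalent to the relation $\sum_{i=1}^s\sum_{j=1}^{w_i}a_j^{(i)}\bsc_j^{(i)}=\bszero$, which is a nontrivial linear dependence because $a_{w_i}^{(i)}\neq 0$ for every index $i$ with $\bsa^{(i)}\neq\bszero$.

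The heart of the argument, and the step I expect to be the only delicate point, is recovering the extra ``$+1$'', i.e.\ upgrading the generic bound $\delta_{m,\bsone}(\cC^\perp)\ge d$ coming from Theorem~\ref{thmsystemdual}(a) to $\delta_{m,\bsone}(\cC^\perp)\ge d+1$ without assuming the $C_i$ nonsingular. Suppose, for contradiction, that $\sum_{i=1}^s w_i\le m-t$. Because $\e=\bsone$, the integers $d_i:=w_i$ automatically satisfy the divisibility requirement $e_i\mid d_i$, obey $0\le d_i\le m$, and have $\sum_i d_i\le m-t$; hence the defining property of an $(m-t,m,\bsone,s)$-system would force the vectors $\bsc_j^{(i)}$, $1\le j\le w_i$, $1\le i\le s$, to be linearly independent, contradicting the nontrivial dependence above. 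Therefore $V_{m,\bsone}(\bsA)=\sum_i w_i\ge m-t+1$, and since $\bsA$ was an arbitrary nonzero element of $\cC^\perp$ we conclude $\delta_{m,\bsone}(\cC^\perp)\ge m-t+1$.

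The reason this closes the gap for $\e=\bsone$ but not in general is precisely that the constraint $e_i\mid d_i$ is now vacuous, so one may legitimately take $d_i=w_i$ even in the borderline case $w_i=m$; for general $\e$ the value $v_{e_i}(\bsa^{(i)})=m$ need not be a multiple of $e_i$, which is exactly where Theorem~\ref{thmnetdual} had to fall back on nonsingularity. Once both implications are in hand, the reformulation via Lemma~\ref{lemnetsystem} immediately gives the claimed equivalence with $\delta_m=\delta_{m,\bsone}$.
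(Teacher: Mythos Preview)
The paper does not actually prove Theorem~\ref{thmnetdualNP}; it is quoted as ``Theorem~2 in~\cite{NP01}'' without proof, so there is no in-paper argument to compare against. Your derivation is correct: it is precisely the specialization of the proof of Theorem~\ref{thmsystemdual} to $\e=\bsone$, and the key observation you isolate---that for $\e=\bsone$ the constraint $e_i\mid d_i$ is vacuous, so one may take $d_i=v_1(\bsa^{(i)})$ even when this equals $m$---is exactly what collapses the case distinction in the proof of Theorem~\ref{thmsystemdual}(b) and eliminates the nonsingularity hypothesis. (Note in particular that when $d=m$, i.e.\ $t=0$, the $(m,m,\bsone,s)$-system property already forces each $C_i$ to be nonsingular by choosing $d_i=m$ and $d_j=0$ for $j\neq i$, so the borderline case is absorbed automatically.) This is essentially the original argument of~\cite{NP01}, recovered here from the paper's more general machinery.
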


We now establish a result similar to Lemma~1 in~\cite{NX02}.

\begin{proposition} \label{prNdual}
Let $q$ be a prime power, let $s \ge 2$ and $m \ge 1$ be integers, and let $\e\in\NN^s$. Then from any 
$\Field_q$-linear subspace $\cN$ of $\Field_q^{sm}$ with $\dim (\cN)\ge sm-m$ we obtain a digital $(u,m,\e,s)$-net over 
$\Field_q$ with $u=\max\{0,m-\delta_{m,\e} (\cN)+1\}$. 
\end{proposition}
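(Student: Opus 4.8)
The plan is to run the duality correspondence of Theorem~\ref{thmnetdual} in reverse: starting from the subspace $\cN$, I will manufacture generating matrices $C_1,\ldots,C_s$ whose associated dual space is exactly $\cN$, and then read off the net parameter from part~(b) of that theorem.

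First I would set $\cC := \cN^\perp$, the dual space of $\cN$ in $\Field_q^{sm}$. Since $s \ge 2$ and $m \ge 1$ force $sm - m = (s-1)m \ge 1$, the hypothesis $\dim(\cN) \ge sm - m$ guarantees $\dim(\cN) \ge 1$, so $\cN \neq \{\bszero\}$ and $\delta_{m,\e}(\cN)$ is a genuine minimum of $V_{m,\e}$ over the nonzero vectors of $\cN$. Moreover $\dim(\cC) = sm - \dim(\cN) \le m$. This dimension bound is the crucial point: it lets me realize $\cC$ as the row space of an $m \times sm$ matrix $C$ over $\Field_q$ (take a basis of $\cC$ as rows and pad with zero rows to reach exactly $m$ rows if $\dim(\cC) < m$). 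Partitioning $C$ into $s$ consecutive blocks of $m$ columns each and writing the $i$th block as $C_i^\top$, I obtain $m \times m$ matrices $C_1,\ldots,C_s$ with $C = (C_1^\top \mid \cdots \mid C_s^\top)$, exactly the shape required by Theorem~\ref{thmnetdual}. By biduality, $\cC^\perp = (\cN^\perp)^\perp = \cN$, and hence $\delta_{m,\e}(\cC^\perp) = \delta_{m,\e}(\cN)$.

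Next I would set $u := \max\{0,\, m - \delta_{m,\e}(\cN) + 1\}$ and invoke the first assertion of Theorem~\ref{thmnetdual}(b), which applies to arbitrary (possibly singular) generating matrices; this is essential, since the $C_i$ produced above carry no nonsingularity guarantee, so part~(a) is unavailable. It then remains only to verify the hypothesis $\delta_{m,\e}(\cC^\perp) \ge m - u + 1$ of that assertion. If $\delta_{m,\e}(\cN) \ge m + 1$, then $u = 0$ and the inequality reads $\delta_{m,\e}(\cN) \ge m + 1$, which holds; otherwise $u = m - \delta_{m,\e}(\cN) + 1$ and the inequality becomes the identity $\delta_{m,\e}(\cN) \ge \delta_{m,\e}(\cN)$. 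Either way the hypothesis is met, so $C_1,\ldots,C_s$ generate a digital $(u,m,\e,s)$-net over $\Field_q$. Finally, since $\delta_{m,\e}(\cN) \ge 1$ always holds, one has $u \le m$, and $u \ge 0$ by construction, so $u$ is an admissible net parameter.

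There is no genuinely hard step here; the argument is essentially a bookkeeping exercise built on Theorem~\ref{thmnetdual}. The one point deserving care is the realization of $\cN^\perp$ as the row space of an $m \times sm$ matrix, which hinges precisely on the dimension hypothesis $\dim(\cN) \ge sm - m$; and the one easy-to-miss subtlety is that one must use part~(b) rather than part~(a), because the constructed matrices need not be nonsingular.
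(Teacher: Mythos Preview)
Your proof is correct and follows essentially the same approach as the paper's own proof: set $\cC=\cN^\perp$, use the dimension hypothesis to realize $\cC$ as the row space of an $m\times sm$ matrix $C=(C_1^\top\mid\cdots\mid C_s^\top)$, and then apply the first assertion of Theorem~\ref{thmnetdual}(b) with $\cC^\perp=\cN$. You simply spell out in more detail the construction of the matrices and the verification that $\delta_{m,\e}(\cN)\ge m-u+1$, and you are right to note that part~(b) is the relevant one since the $C_i$ may be singular.
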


\begin{proof}
 For $\cC:=\cN^\perp$ we have $\dim (\cC)\le m$, and so $\cC$ is the row space of a suitable digital net over $\Field_q$. 
Now let $u=\max\{0,m-\delta_{m,\e} (\cN)+1\}$. Then $\delta_{m,\e}(\cC^\perp)=\delta_{m,\e}(\cN) \ge m-u+1$, and so, by Theorem~\ref{thmnetdual}, 
$\cC=\cN^\perp$ generates a digital $(u,m,\e,s)$-net over $\Field_q$. 
\end{proof}

\begin{remark} \label{redu} {\rm
We obviously have $v_e(\bsa) \le v_1(\bsa) +e-1$ for any $\bsa \in \F_q^m$, and so
$$
V_{m,\e}(\bsA) \le V_{m,{\bf 1}}(\bsA)+\sum_{i=1}^s (e_i-1)=V_m(\bsA)+\sum_{i=1}^s (e_i-1)
$$
for any $\bsA \in \F_q^{sm}$ (compare with Remark~\ref{revm}). It follows that
$$
\delta_{m,\e}(\cN) \le \delta_{m,{\bf 1}}(\cN)+\sum_{i=1}^s (e_i-1) = \delta_m(\cN)+\sum_{i=1}^s (e_i-1)
$$
for any linear subspace $\cN$ of $\F_q^{sm}$. This is in good accordance with \cite[Proposition~1]{HN13} where the term $\sum_{i=1}^s (e_i-1)$ governs
the transition from $(u,m,\e,s)$-nets to $(t,m,s)$-nets.}
\end{remark}

\section{Applications of the duality theory} \label{secad}

In the following, we present an application of the duality theory in Section~\ref{secduality} and of the theory of global function fields. We use the same notation
and terminology for global function fields as in the monograph~\cite{NX09}. In particular, for any divisor $D$ of a global function field $F$, let
$$
\cL(D) =\{f \in F^*: {\rm div}(f)+D \ge 0\} \cup \{0\}
$$
be the Riemann-Roch space associated with $D$, where ${\rm div}(f)$ denotes the principal divisor of $f \in F^* := F \setminus \{0\}$. Note that $\cL(D)$ is a
finite-dimensional vector space over the full constant field of $F$ (compare with \cite[Section~3.4]{NX09}).

\begin{theorem} \label{thgf}
Let $F$ be a global function field with full constant field $\F_q$ and genus $g$. For an integer $s \ge 2$, let $P_1,\ldots,P_s$ be $s$ distinct places of $F$.
Put $e_i=\deg(P_i)$ for $1 \le i \le s$ and $\e =(e_1,\ldots,e_s) \in \N^s$. Then for every integer $m \ge \max (1,g)$ which is a multiple of ${\rm lcm} (e_1,\ldots,e_s)$,
we can construct a digital $(g,m,\e,s)$-net over $\F_q$.
\end{theorem}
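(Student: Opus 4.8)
The plan is to use the duality theory of Section~\ref{secduality}, specifically Proposition~\ref{prNdual}, by constructing an appropriate $\Field_q$-linear subspace $\cN$ of $\Field_q^{sm}$ of dimension at least $sm-m$ whose minimum distance $\delta_{m,\e}(\cN)$ is large enough to force $u=g$. By Proposition~\ref{prNdual}, a subspace with $\dim(\cN) \ge sm-m$ yields a digital $(u,m,\e,s)$-net with $u=\max\{0,m-\delta_{m,\e}(\cN)+1\}$, so I would aim to show $\delta_{m,\e}(\cN) \ge m-g+1$, which gives $u \le g$ (and one expects equality generically).

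First I would set up the function-field data. Choose a divisor $G$ of $F$ of degree $m-g$ (for instance a suitable multiple of places, or an effective divisor of that degree whose support avoids the $P_i$), so that by Riemann--Roch $\dim_{\F_q} \cL(G) = m-g+1-g+\ell(\text{canonical}-G)$; more simply, choosing $G$ of large enough degree and positioning it correctly I can control the evaluation maps at the places $P_i$. The key construction is, for each place $P_i$ of degree $e_i$, to use the local expansion of functions $f \in \cL(G)$ at $P_i$ with respect to a local parameter, producing for each $f$ a string of expansion coefficients in the residue field $\Field_{q^{e_i}}$, then applying an $\Field_q$-linear isomorphism $\Field_{q^{e_i}} \to \Field_q^{e_i}$ to land in $\Field_q^m$ (here the hypothesis $e_i \mid m$, via $\mathrm{lcm}(e_1,\ldots,e_s)\mid m$, is what makes the blocks align into $m$ coordinates). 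Concatenating over $i=1,\ldots,s$ gives a map $\cL(G) \to \Field_q^{sm}$; I would let $\cC \subseteq \Field_q^{sm}$ be the image (or a related code) and take $\cN = \cC^\perp$, or dually take $\cN$ directly as the row space arising from the generating matrices $C_i$ built column-by-column from these local expansions.

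The core computation is the minimum-distance bound. For a nonzero $\bsA=(\bsa^{(1)},\ldots,\bsa^{(s)}) \in \cN \setminus \{\bszero\}$ lying in the dual code, I would interpret the condition via the system/weight machinery: if $V_{m,\e}(\bsA)$ were small, say $\sum_i v_{e_i}(\bsa^{(i)}) \le m-g$, then because $e_i \mid v_{e_i}(\bsa^{(i)})$ the vector $\bsA$ would correspond to a nonzero function $f \in \cL(G)$ vanishing to prescribed orders at the $P_i$ totaling more than $\deg(G)=m-g$; the standard degree argument (the number of zeros of a nonzero function cannot exceed the degree of its pole divisor, i.e. $\deg(G)$) forces $f=0$, a contradiction. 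This pins down $\delta_{m,\e}(\cN) \ge m-g+1$, whence $u \le g$ as desired.

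The step I expect to be the main obstacle is translating the weight $v_{e_i}$ correctly into vanishing orders at $P_i$ under the $\Field_q$-linear isomorphism $\Field_{q^{e_i}}\to\Field_q^{e_i}$: one must check that a block of $\Field_q$-coordinates being zero up to position $v_{e_i}(\bsa^{(i)})$ corresponds exactly to the function vanishing to order $v_{e_i}(\bsa^{(i)})/e_i$ (measured in the degree-$e_i$ place) at $P_i$, so that the total contribution to the zero divisor is $\sum_i e_i \cdot (v_{e_i}(\bsa^{(i)})/e_i) = V_{m,\e}(\bsA)$. Handling the ceiling in the definition of $v_{e_i}$ and the truncation at $m$ carefully, and ensuring the Riemann--Roch dimension count gives $\dim \cC \le m$ (hence $\dim \cN \ge sm-m$), are the technical points; once the weight-to-order dictionary is established the degree inequality closes the argument immediately.
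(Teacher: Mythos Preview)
Your overall strategy---build an $\F_q$-linear subspace $\cN\subseteq\F_q^{sm}$ from local expansions at the $P_i$ and invoke Proposition~\ref{prNdual}---is the paper's strategy. But there is a real gap in your setup. You choose $\deg(G)=m-g$, let $\cC$ be the image of the local-expansion map $\cL(G)\to\F_q^{sm}$, and put $\cN=\cC^\perp$ (so that $\dim\cC\le m$ forces $\dim\cN\ge sm-m$). You then argue that a low-weight $\bsA\in\cN$ ``would correspond to a nonzero function $f\in\cL(G)$'' whose zeros at the $P_i$ total more than $\deg(G)$. But elements of $\cC^\perp$ are \emph{not} images of functions in $\cL(G)$; only elements of $\cC$ are. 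The degree argument you sketch therefore does not apply to $\cN=\cC^\perp$, and without an additional residue-theorem-type duality identifying $\cC^\perp$ with another function-field code (which you do not invoke, and which is not immediate in this block-expansion setting) there is no route from your choices to a lower bound on $\delta_{m,\e}(\cN)$.

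The paper avoids this by making the opposite choice: it takes $\cN$ to be the image of $\cL(G)$ itself, with $\deg(G)=sm-m+g-1$. Then Riemann--Roch gives $\dim(\cN)\ge\deg(G)+1-g=sm-m$ directly, and for nonzero $f\in\cL(G)$ the degree inequality (now legitimately applied, since the vector really comes from a function) yields $V_{m,\e}\ge sm-\deg(G)=m-g+1$. The weight-to-vanishing dictionary you flag as the technical crux is made clean by listing the expansion coefficients in \emph{decreasing} index order within each block, so that high $\nu_{P_i}(f)$ produces trailing zero blocks and hence small $v_{e_i}$; concretely $v_{e_i}=m-e_i\min(m/e_i,\nu_{P_i}(f))$, and no ceilings intervene because $e_i\mid m$.
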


\begin{proof}
We fix integers $s$ and $m$ satisfying the hypotheses of the theorem. It follows from \cite[Corollary~5.2.10(c)]{St} that for a sufficiently large integer $d >
\max_{1 \le i \le s} e_i$, there exist places $Q_1$ and $Q_2$ of $F$ with $\deg(Q_1)=d+1$ and $\deg(Q_2)=d$. We define the divisor $G$ of $F$ by
$$
G=(sm-m+g-1)(Q_1-Q_2).
$$
Then
\begin{equation} \label{eqdg}
\deg(G)=sm-m+g-1.
\end{equation}
(More generally, we can take any divisor $G$ of $F$ such that $\deg(G)$ satisfies~\eqref{eqdg} and the support of $G$ is disjoint from the set $\{P_1,\ldots,P_s\}$
of places.) Take any $f \in \cL(G)$. Fix $i \in \{1,\ldots,s\}$ for the moment. Let $\nu_{P_i}$ be the normalized discrete valuation of $F$ corresponding to the place
$P_i$. We have $\nu_{P_i}(f) \ge 0$ by the choices of $f$ and $G$, and so the local expansion of $f$ at $P_i$ has the form
$$
f=\sum_{j=0}^{\infty} a_j^{(i)}(f)z_i^j
$$
with all $a_j^{(i)}(f) \in \F_{q^{e_i}}$, where $z_i$ is a local parameter at $P_i$. Choose an $\F_q$-linear isomorphism $\psi_i: \F_{q^{e_i}} \to \F_q^{e_i}$ and for
convenience put $k_i=m/e_i \in \N$. Then we define the $\F_q$-linear map $\theta_i: \cL(G) \to \F_q^m$ by
\begin{equation} \label{eqti}
\theta_i(f)=\left(\psi_i(a_{k_i-1}^{(i)}(f)),\psi_i(a_{k_i-2}^{(i)}(f)),\ldots,\psi_i(a_0^{(i)}(f)) \right) \qquad \mbox{for all } f \in \cL(G).
\end{equation}
Furthermore, we define the $\F_q$-linear map $\theta : \cL(G) \to \F_q^{sm}$ by
$$
\theta (f)=(\theta_1(f),\ldots,\theta_s(f)) \qquad \mbox{for all } f \in \cL(G).
$$
By definition, let the $\F_q$-linear space $\cN \subseteq \F_q^{sm}$ be the image of $\theta$.

Now we take any nonzero $f \in \cL(G)$. We put
$$
w_i(f)=\min (k_i,\nu_{P_i}(f)) \qquad \mbox{for } 1 \le i \le s.
$$
We claim that for the weights $v_{e_i}(\theta_i(f))$ we have
\begin{equation} \label{eqth}
v_{e_i}(\theta_i(f))=m-e_iw_i(f) \qquad \mbox{for } 1 \le i \le s.
\end{equation}
If $\nu_{P_i}(f) \ge k_i$, then $a_j^{(i)}(f)=0$ for $0 \le j \le k_i-1$, and so $\theta_i(f)=\bszero \in \F_q^m$. Then $v_{e_i}(\theta_i(f))=0$, and so~\eqref{eqth} holds
in this case. In the remaining case, we have $0 \le h_i := \nu_{P_i}(f) < k_i$. Then $a_j^{(i)}(f)=0$ for $0 \le j < h_i$ and $a_{h_i}^{(i)}(f) \ne 0$. It follows that
$\psi_i(a_j^{(i)}(f))=\bszero \in \F_q^{e_i}$ for $0 \le j < h_i$ and $\psi_i(a_{h_i}^{(i)}(f)) \ne \bszero$. The definition of $\theta_i(f)$ in~\eqref{eqti} shows then that
$$
v_{e_i}(\theta_i(f))=(k_i-h_i)e_i=m-e_i \nu_{P_i}(f),
$$
and so~\eqref{eqth} holds again. Consequently, we obtain
$$
V_{m,\e}(\theta(f))=\sum_{i=1}^s v_{e_i}(\theta_i(f))=sm- \sum_{i=1}^s e_iw_i(f).
$$
We have $\nu_{P_i}(f) \ge w_i(f)$ for $1 \le i \le s$, and so $f \in \cL \left(G-\sum_{i=1}^s w_i(f)P_i \right)$. Since $f \ne 0$, this means that
$$
0 \le {\rm div}(f)+G-\sum_{i=1}^s w_i(f)P_i.
$$
By applying the degree map for divisors and noting that $\deg({\rm div}(f))=0$ according to \cite[Corollary~3.4.3]{NX09}, we deduce that 
$$
0 \le \deg(G)- \sum_{i=1}^s e_iw_i(f)=\deg(G)+V_{m,\e}(\theta(f))-sm.
$$
Therefore by~\eqref{eqdg},
$$
V_{m,\e}(\theta(f)) \ge sm-\deg(G)=m-g+1 > 0.
$$
This shows, in particular, that the map $\theta$ is injective, and also
$$
\delta_{m,\e}(\cN) \ge m-g+1.
$$
Moreover,
$$
\dim(\cN)=\dim(\cL(G)) \ge \deg(G)+1-g=sm-m,
$$
where we applied the Riemann-Roch theorem (see \cite[Theorem~3.6.14]{NX09}) in the second step. The rest follows from Proposition~\ref{prNdual}.
\end{proof}

\begin{remark} \label{regf} {\rm
If we combine Theorem~\ref{thgf} with Proposition~\ref{prprd}, then we usually get many more integers $m \ge \max (1,g)$ for which we can construct a digital
$(g,m,\e,s)$-net over $\F_q$. For instance, if at least one $e_i=1$, then we obtain a digital $(g,m,\e,s)$-net over $\F_q$ for any integer $m \ge \max (1,g)$. }
\end{remark}

\begin{example} \label{exgf} {\rm
Let $q$ be an arbitrary prime power and let $s=q+2$. Let $F$ be the rational function field over $\F_q$. Then $F$ has genus $g=0$ and exactly $q+1$ places of
degree $1$ (the infinite place corresponding to the degree valuation and the $q$ places corresponding to the distinct monic linear polynomials over $\F_q$).
Choose distinct places $P_1,\ldots,P_s$ of $F$ such that $\deg(P_i)=1$ for $1 \le i \le s-1=q+1$ and $\deg(P_s)=2$. Note that $P_s$ corresponds to a monic
irreducible quadratic polynomial over $\F_q$. Then Theorem~\ref{thgf} shows that for every even integer $m \ge 2$ we can obtain a digital $(0,m,\e,q+2)$-net
over $\F_q$ with $\e =(1,\ldots,1,2) \in \N^{q+2}$. In combination with Proposition~\ref{prprd}, we get a digital $(0,m,\e,q+2)$-net over $\F_q$ for any integer
$m \ge 1$. Note that for ${\bf 1}=(1,\ldots,1) \in \N^{q+2}$ and $m \ge 2$, there cannot exist a $(0,m,{\bf 1},q+2)$-net in base $q$, that is, a $(0,m,q+2)$-net
in base $q$, as this would violate a combinatorial bound for nets in~\cite{N87} (see also \cite[Corollary~4.19]{DP} and \cite[Corollary~4.21]{N92}). }
\end{example}

\begin{example} \label{exco} {\rm
Let $q=2$, $s=4$, $m=3$, and $\e =(1,1,1,2)$. The following is a concrete example of a digital $(0,3,\e,4)$-net over $\F_2$. The four generating matrices over
$\F_2$ are given by
$$
C_1=\left(
\begin{array}{ccc}
1 & 0 & 0\\
0 & 1 & 0\\
0 & 0 & 1
\end{array}
\right), \ \
C_2=\left(
\begin{array}{ccc}
0 & 0 & 1\\
1 & 1 & 0\\
0 & 1 & 0
\end{array}
\right), \ \
C_3=\left(
\begin{array}{ccc}
0 & 1 & 1\\
1 & 0 & 1\\
0 & 0 & 1\\
\end{array}
\right), \ \
C_4=\left(
\begin{array}{ccc}
1 & 0 & 1\\
0 & 1 & 0\\
0 & 0 & 0
\end{array}
\right).
$$
It is easily verified that the row vectors of these matrices form a $(3,3,\e,4)$-system over $\F_2$ (note that from $C_4$ we take either no row vectors or
the first two row vectors). Hence it follows from Lemma~\ref{lemnetsystem} that $C_1,C_2,C_3,C_4$ do indeed generate a digital $(0,3,\e,4)$-net over $\F_2$. }
\end{example}

\end{document}